\title{Manipulative waiters with probabilistic intuition}
\author{
Ma\l gorzata Bednarska-Bzd\c ega
\thanks{Department of Discrete Mathematics, Faculty of Mathematics and CS, Adam Mickiewicz University, Pozna\'n, Poland. Email: mbed@amu.edu.pl}
\and Dan Hefetz
\thanks{School of Mathematics, University of Birmingham, Edgbaston, Birmingham B15 2TT, United
Kingdom. Email: danny.hefetz@gmail.com. Research supported by EPSRC grant EP/K033379/1.} \and Michael Krivelevich \thanks{School of Mathematical Sciences, Raymond and Beverly Sackler Faculty of Exact Sciences, Tel Aviv University, 69978, Israel. Email: krivelev@post.tau.ac.il. Research supported in part by USA-Israel BSF Grant 2010115 and by grant 912/12 from the Israel Science Foundation.} \and Tomasz \L uczak
\thanks{Department of Discrete Mathematics, Faculty of Mathematics and CS, Adam Mickiewicz University, Pozna\'n, Poland. Email: tomasz@amu.edu.pl. Research supported by NCN grant 2012/06/A/ST1/00261.} 
}
\newcommand{\comp}{\mathcal L}
\newcommand{\cyc}{\mathcal Cyc}
\newcommand{\eps}{\varepsilon}
\renewcommand{\mod}{\text{ mod }}
\newcommand{\cA}{\mathcal{A}}
\newcommand{\cF}{\mathcal{F}}
\newcommand{\MB}{\textrm{MB}}
\newcommand{\WC}{\textrm{WC}}
\newtheorem{theorem}{Theorem} [section]
\newtheorem{lemma}[theorem]{Lemma}
\newtheorem{proposition}[theorem]{Proposition}
\newtheorem{definition}[theorem]{Definition}
\newtheorem{conjecture}[theorem]{Conjecture}
\newtheorem{question}[theorem]{Question}
\begin{document}

\maketitle

\begin{abstract}
For positive integers $n$ and $q$ and a monotone graph property $\cA$, we consider the two player, perfect information game $\WC(n,q,\cA)$, which is defined as follows. The game proceeds in rounds. In each round, the first player, called Waiter, offers the second player, called Client, $q+1$ edges of the complete graph~$K_n$ which have not been offered previously. Client then chooses one of these edges which he keeps and the remaining $q$ edges go back to Waiter. If at the end of the game, the graph which consists of the edges chosen by Client satisfies the property $\cA$, then Waiter is declared the winner; otherwise Client wins the game. In this paper we study such games (also known as Picker-Chooser games) for a variety of natural graph theoretic parameters, such as the size of a largest component or the length of a longest cycle. In particular, we describe a phase transition type phenomenon which occurs when the parameter $q$ is close to $n$ and is reminiscent of phase transition phenomena in random graphs. Namely, we prove that if $q \geq (1 + \varepsilon) n$, then Client can avoid components of order $c \varepsilon^{-2} \ln n$ for some absolute constant $c > 0$, whereas, for $q \leq (1 - \varepsilon) n$, Waiter can force a giant, linearly sized component in Client's graph. In the second part of the paper, we prove that Waiter can force Client's graph to be pancyclic for every $q \leq c n$, where $c > 0$ is an appropriate constant. Note that this behaviour is in stark contrast to the threshold for pancyclicity and Hamiltonicity of random graphs.    
\end{abstract}

\begin{quote}
\textbf{AMS subject classification codes:} 91A43, 91A46, 05C38, 05C80. 
\end{quote}

\section{Introduction}\label{intro}

The theory of positional games on graphs and hypergraphs goes back to the seminal papers of Hales and Jewett~\cite{HJ} and of Erd\H{o}s and Selfridge~\cite{ES}. It has since become a highly developed area of combinatorics (see the monograph of Beck~\cite{TTT} and the recent monograph~\cite{HKSS}). The most popular and widely studied positional games are the so-called Maker-Breaker games $\MB(n,q,\cA)$. In each round of such games, Maker claims one previously unclaimed edge of the complete graph $K_n$ and then Breaker responds by claiming $q$ previously unclaimed edges. Maker's goal is to build a graph which satisfies the monotone increasing property $\cA$, whereas Breaker aims to prevent Maker from achieving his goal. Since this is a finite, perfect information game with no chance moves and without the possibility of a draw, one of the players must have a winning strategy. It has been observed long ago, that very often (although by no means always) the winner of the game could be predicted using a heuristic known as the \emph{probabilistic intuition}. This intuition suggests that the player who has a higher chance to win the game when both players are playing randomly is also the one who wins the game when both players are playing optimally. More precisely, if the random graph $G\left(n, \left\lceil \binom{n}{2}/(q+1) \right\rceil \right)$ satisfies property $\cA$ with probability tending to 1 as $n$ tends to infinity, then Maker has a winning strategy for $\MB(n,q,\cA)$. If, on the other hand, this probability tends to 0 as $n$ tends to infinity, then $\MB(n,q,\cA)$ is Breaker's win. Natural examples of this fascinating phenomenon are the cases when $\cA$ is the property of being connected~\cite{GSz} and when it is the property of being Hamiltonian~\cite{kriv}.

Another class of well-studied positional games are the so-called Avoider-Enforcer games, in which Enforcer aims to force Avoider to build a graph which satisfies some monotone increasing property. Such games are sometimes referred to as mis\`ere Maker-Breaker games. There are two different sets of rules for Avoider-Enforcer games: \emph{strict rules} under which the number of edges a player claims per round is precisely his bias and \emph{monotone rules} under which the number of edges a player claims per round is at least as large as his bias (for more information on Avoider-Enforcer games see, for instance,~\cite{HKSae, HKSSae}).

In this paper we consider positional games which are closely related to Maker-Breaker and Avoider-Enforcer games; the main difference between these game types is the process of selecting edges. In every round of the \emph{Waiter-Client} game $\WC(n,q,\cA)$, the first player, called Waiter, offers the second player, called Client, $q+1$ previously unclaimed edges of $K_n$. Client then chooses one of these edges which he keeps and the remaining $q$ edges go back to Waiter (if in the final round of the game fewer than $q+1$ unclaimed edges remain, then all of them are claimed by Waiter). The game ends as soon as all edges of $K_n$ have been claimed. Waiter wins $\WC(n,q,\cA)$ if, at the end of the game, the graph consisting of all vertices of $K_n$ and all edges claimed by Client satisfies the monotone increasing property $\cA$; otherwise Client is the winner. We will sometimes refer to $\WC(n,q,\cA)$ as the $(q : 1)$ Waiter-Client game $\cA$ on $E(K_n)$. Waiter-Client games were first defined and studied by Beck under the name of Picker-Chooser (see, e.g.~\cite{becksec}). However, we feel that the names Waiter and Client are more suitable to describe the respective roles of the two players.

The probabilistic intuition turns out to be useful for Waiter-Client games as well. In particular, it is known to hold when $\cA$ is the property of admitting a clique of a given fixed order~\cite{BHL} and when it is the property of having diameter two~\cite{ctcs}.

This article is devoted to the study of Waiter-Client games with respect to several natural \emph{global} graph properties. In the first part of the paper we consider the order of a largest component in Client's graph. Playing a $(q : 1)$ Waiter-Client game on $E(K_n)$, and assuming both Waiter and Client follow their optimal strategies, let $\comp(n,q)$ denote the order of a largest component in Client's graph if Client tries to minimize this quantity and Waiter tries to maximize it. We prove the following phase transition type result.

\begin{theorem} \label{th::largeComponent}
Let $n$ be a sufficiently large integer and let $0 < \varepsilon = \varepsilon(n) < 1$. 
\begin{enumerate}[{\bf(i)}]
\item\label{comp_client}
If $q \geq (1 + \varepsilon) n$, then Client has a strategy to ensure that $\comp(n,q) \leq c \varepsilon^{-2} \ln n$ will hold for some absolute constant $c$. 
\item\label{comp_waiter}
If $q \leq (1 - \varepsilon) n$, then Waiter has a strategy to ensure that $\comp(n,q) \geq \min \{n, 2 \varepsilon n - 2\}$.
\end{enumerate}
\end{theorem}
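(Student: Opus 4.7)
I plan for Client to play a greedy potential-minimisation strategy based on an exponentially weighted component potential
\[
\Phi(G) = \sum_{v \in V(K_n)} \lambda^{|C_G(v)|},
\]
where $C_G(v)$ is the component of $v$ in Client's current graph $G$ and $\lambda = 1+\gamma$ with $\gamma = \Theta(\eps^2)$ to be calibrated. Initially $\Phi = n\lambda$ and $\max_v |C_G(v)| \le \log_\lambda \Phi(G)$. In each round Client picks the offered edge minimising $\Delta\Phi$, always preferring intra-component edges (which cost $0$). A merging edge between components of orders $a,b$ changes $\Phi$ by $a\lambda^a(\lambda^b-1)+b\lambda^b(\lambda^a-1)$, which via $\lambda^x - 1 \le x(\lambda-1)\lambda^x$ is at most $2ab(\lambda-1)\lambda^{a+b}$. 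Summing these increments over \emph{all} currently unclaimed edges and combining with the averaging bound (the minimum over any $q+1$ of them is at most their mean) and $q+1 \ge (1+\eps)n$, I expect each round to increase $\Phi$ by at most $O(\gamma/n)\cdot \Phi$. Iterating over $\le \binom{n}{2}/(q+1)$ rounds, $\Phi$ stays $O(n\lambda)$, and therefore $\max_v |C(v)| \le \log_\lambda \Phi = O(\eps^{-2}\ln n)$. The technical heart is the precise calibration of $\gamma$; this is what dictates $\eps^{-2}$ rather than the naive $\eps^{-1}$.

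\textbf{Part (ii): Waiter's strategy.} For Waiter I plan a BFS-style tree extension. Fix a root $v_0$ and maintain a tree $T$ contained in Client's graph. In each round Waiter offers $q+1$ previously unclaimed edges from the cut $\bigl(V(T), V(K_n)\setminus V(T)\bigr)$; whichever edge Client picks, one new vertex joins $T$. Feasibility is straightforward: this cut contains $|V(T)|\bigl(n-|V(T)|\bigr)$ edges, of which at most $(|V(T)|-1)q$ already lie in Waiter's graph, so the round succeeds while $|V(T)| \le n-q-1$. This alone drives $|V(T)|$ up to $n-q \ge \eps n$, one factor of two short of the claim.

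Closing the remaining factor of two---matching the $(2+o(1))\eps n$ size of the giant component in $G\bigl(n,(1+\eps)/n\bigr)$---is the main obstacle. A natural remedy is to interleave the tree extension with a second sub-strategy that exploits Waiter's $q$ ``wasted'' edges per round: alongside the cut edges of $T$, Waiter offers some edges supported inside $V(K_n)\setminus V(T)$, so that any Client refusal of a cut edge forces the creation of a matching-type component outside $T$. Later, when Waiter's cut edges are chosen, they attach one endpoint of such a matching edge to $T$, pulling in two fresh vertices at once. Organising this double-bookkeeping so that $|V(T)|$ ends at $\ge 2\eps n - 2$ while staying within the global edge budget $\binom{n}{2}$ and respecting $q \le (1-\eps)n$ is the place where the hypothesis on $q$ is used tightly, and it is the step I expect to require the most care.
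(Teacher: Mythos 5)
Both parts of your proposal have genuine gaps.

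\textbf{Part (i).} The step ``each round increases $\Phi$ by at most $O(\gamma/n)\cdot\Phi$'' does not follow from your averaging bound, and is in fact false for this potential. The only a priori control you have on Client's forced increment is
$\min_{\text{offered}}\Delta\Phi \le \frac{1}{q+1}\sum_{e \text{ free}}\Delta\Phi(e)$, and the right-hand side is governed by
$\gamma\bigl(\sum_C |C|^2\lambda^{|C|}\bigr)^2/(q+1)$, a quantity that is \emph{not} controlled by $\Phi=\sum_C|C|\lambda^{|C|}$. Already in round one (all components singletons) this bound equals $\Theta(\gamma n)$, not $O(\gamma/n)\Phi=O(\gamma)$, so summing it over $\Theta(n)$ rounds would force $\gamma=O(1/n)$ and a vacuous component bound. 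Worse, the bound cannot be repaired by sharper order statistics: as soon as roughly $\sqrt{2n}$ vertices lie in components of some size $s$, there are at least $q+1$ free edges joining pairs of such components, so Waiter can offer only those and force an increment of order $\gamma s^2\lambda^{2s}$ in a single round; once $\lambda^{s}\gg\sqrt{n}$ (which happens while $s$ is still a constant fraction of your target $\varepsilon^{-2}\ln n$) this single increment already exceeds any $\Phi=O(n)$, so the potential cannot certify the bound you want. This self-interaction of large components is precisely why the paper does not use a single exponential potential: it applies the Erd\H{o}s--Selfridge-type Theorem~\ref{BESPC} to the multi-family $\mathcal{F}_r$ of ordered $r$-tuples of paths whose union is a tree, with $r=\lfloor\ln n\rfloor$, bounds $\Phi(\mathcal{F}_r)\le c^r n^2 r^{2r+1}\varepsilon^{-4r}$ via a careful count of trees with few leaves (Lemma~\ref{cl::fewTrees}), and uses the lower bound of $(s/4)^{2r}$ such tuples inside any component of order $s$ (Lemma~\ref{cl::manyPairs}); taking $2r$-th roots is the amplification that produces $\varepsilon^{-2}\ln n$. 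Your approach, as calibrated, cannot reach this bound.

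\textbf{Part (ii).} You correctly obtain a component of order $n-q\ge\varepsilon n$ by the naive cut-extension, but the missing factor of two is the entire content of the statement, and your sketched fix (planting matching edges outside the tree to absorb two vertices at a time) is not developed and is not how the difficulty is resolved. The paper's strategy (Theorem~\ref{BigCompSup}) instead grows the tree only to $n-q-1$ vertices while maintaining two invariants: every edge with both endpoints outside the tree stays free, and there is an ordering $u_1,\dots,u_{q+1}$ of the outside vertices with $d_{G_W}(u_i,V(T))\le i-1$ for small $i$ (and $\le n-q-2$ for the rest). These invariants guarantee that for another $\approx n-q-1$ rounds Waiter can, in round $i$, offer all free edges from $u_i$ into the current component together with one free edge from each $u_j$, $n-q\le j\le q+1$ --- exactly $(i-1)+(n-q-i)+(2(q+1)-n)=q+1$ edges, every one of which enlarges the component by a new vertex. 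Without some such bookkeeping of Waiter's wasted edges during the first phase, the second phase stalls: after the tree has $n-q-1$ vertices, a generic outside vertex may already have all but fewer than $q+1$ of its edges into the tree claimed by Waiter. So the key construction is absent from your argument rather than merely deferred.
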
 

Theorem~\ref{th::largeComponent} is a new and remarkable manifestation of the probabilistic intuition. It is well-known that, when $q$ is close to $n$, the size of a largest component in Client's graph when both players play randomly (and thus Client's graph is the random graph $G(n, \lfloor \binom{n}{2}/(q+1) \rfloor)$) undergoes a phase transition (see \cite{ER} but also~\cite{Bol, JLR}). Namely, if $q \geq (1 + \varepsilon) n$, then there exists an absolute constant $c > 0$ such that, asymptotically almost surely (or a.a.s. for brevity), every  component in RandomClient's graph has at most $c \varepsilon^{-2} \ln n$ vertices. On the other hand, if $q \leq (1 - \varepsilon) n$, then a.a.s. there exists a component on at least $(2+o_{\varepsilon}(1)) \varepsilon n$ vertices in RandomClient's graph. By the aforementioned results, for every fixed $\varepsilon > 0$ and every $q$ which is not in the \emph{critical window} $((1 - \varepsilon )n, (1 + \varepsilon) n)$, the size of a largest component in Client's graph when both players play randomly and when both players follow their optimal strategies is of the same order of magnitude. Moreover, the dependency on $\varepsilon$ exhibited by random graphs in the \emph{sub-critical regime} and the \emph{super-critical regime}, matches the bounds stated in Theorem~\ref{th::largeComponent}.       

Note that Maker-Breaker games exhibit an even stronger phase transition type behavior than that of random graphs. Indeed, it was proved in~\cite{mbtrans} that if $q = c n$ for some constant $c < 1$, then Maker can build a component on $\Theta(n)$ vertices, whereas, if $c > 1$, then Breaker can ensure that the order of every component in Maker's graph will be bounded from above by some constant (which depends on $c$). Moreover, it was shown in~\cite{mbtrans} that the width of the critical window is $O(\sqrt{n})$. Our next result shows that, similarly to the case of random graphs, if $q = c n$ for some constant $c < 1$, then Waiter can force Client to build a connected component of order $\Theta(\ln n)$.   

\begin{proposition} \label{conn:sparse}
If $q = cn$ for some constant  $c > 0$, then $\comp(n,q) = \Omega(\ln n)$.  
\end{proposition}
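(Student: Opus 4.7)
The plan is to split on the size of $c$. If $c < 1$, I would simply invoke Theorem~\ref{th::largeComponent}(ii) with $\varepsilon = 1 - c$, which immediately yields $\comp(n, q) \ge 2(1 - c) n - 2 = \Omega(n)$, much stronger than required. The interesting case is $c \ge 1$, where Theorem~\ref{th::largeComponent}(ii) is vacuous and a genuinely new argument is required.

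For $c \ge 1$, the idea is to force Client's graph to contain a labelled path on $k = \Theta(\log n)$ vertices, which automatically produces a component of order at least $k$. Concretely, set $k := \lfloor (\log n)/(2\log(c+2)) \rfloor$ and let $\mathcal{P}$ denote the family of labelled copies of $P_k$ in $K_n$, so $|\mathcal{P}| \sim n^k/2$. Call $P \in \mathcal{P}$ \emph{viable at time $t$} if no edge of $P$ has been offered and rejected by Client up to time $t$, and consider the potential
$$ \Phi_t \;=\; \sum_{\substack{P \in \mathcal{P}\\ P \text{ viable at time }t}} (q+1)^{|E(P) \cap \mathrm{Client}_t| - (k-1)}. $$
The choice of $k$ ensures $c^{k-1} \le \sqrt{n}$, and hence $\Phi_0 = |\mathcal{P}|(q+1)^{-(k-1)} \sim n/(2 c^{k-1}) = \Omega(\sqrt{n})$, polynomially large in $n$.

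Waiter's strategy would be the natural Erd\H{o}s--Selfridge-style one: in each round, offer the $q+1$ unoffered edges $e$ of largest load $w_t(e) = \sum_{P \ni e,\, P \text{ viable}} (q+1)^{|E(P)\cap \mathrm{Client}_t|-(k-1)}$. A standard amortized analysis, paralleling the Waiter--Client weight-function arguments in~\cite{BHL,ctcs}, should show that the $(q+1)$-factor gain on paths containing Client's newly picked edge essentially cancels the losses from paths containing any of the $q$ rejected edges, and hence $\Phi_t \ge \Phi_0/2 \ge 1$ throughout the game. At its conclusion every edge belongs either to Client's graph or to Waiter's graph, so a viable path at that time lies entirely inside Client's graph and contributes weight exactly $1$; thus $\Phi_{\mathrm{end}} \ge 1$ forces a copy of $P_k$ into Client's graph, giving a component of order $\Omega(\log n)$.

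The hardest step will be justifying the invariant ``$\Phi_t$ does not drop substantially per round''. The delicate point is that paths sharing two or more edges with Waiter's round offer are guaranteed to die that round regardless of Client's pick, since at least one of their intersected edges will be rejected. I expect an averaging argument, exploiting that Waiter offers the maximum-load edges, to show that the contribution of these multiply-intersected paths is a negligible correction to the dominant balance between Client's $(q+1)$ gain-factor and the $q$ simultaneously rejected edges.
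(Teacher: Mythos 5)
Your reduction of the case $c<1$ to Theorem~\ref{th::largeComponent}(ii) is fine, but the potential argument you propose for $c\ge 1$ has a genuine gap, and it is not the route the paper takes. The problem is the direction of control: in a Waiter--Client game it is \emph{Client} who decides which offered edge survives, so an Erd\H{o}s--Selfridge-type potential of the form you write down is one that Client can prevent from \emph{increasing} (this is exactly Theorem~\ref{BESPC}), not one that Waiter can prevent from \emph{decreasing}. Concretely, suppose Waiter offers free edges $e_1,\dots,e_{q+1}$, let $w_i$ be the weight of the viable paths containing $e_i$ and no other offered edge, and let $W$ be the weight of viable paths containing at least two offered edges (these die no matter what Client does). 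If Client takes $e_j$, the potential changes by $(q+1)w_j-\sum_{i=1}^{q+1}w_i-W$. Client will take the edge of \emph{minimum} load among those offered, and since $\min_j w_j\le\frac{1}{q+1}\sum_i w_i$, the change is always $\le -W\le 0$, with equality in the first inequality only if all the $w_i$ coincide. So the ``dominant balance'' you invoke between the $(q+1)$-fold gain and the $q$ rejected edges simply is not there: it would require Waiter to always find $q+1$ free edges of exactly equal load (and with $W$ negligible), which you have not justified and which there is no reason to expect after the first round. The fact that $\Phi_0=\Omega(\sqrt n)>1$ only tells you that Theorem~\ref{BESPC} fails to give Client a win; it is a necessary condition for Waiter, never a sufficient one, and no converse of Erd\H{o}s--Selfridge is available here.

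The paper's proof is instead fully constructive and uses no potential function. After reducing to $c\ge 1$ via Proposition~\ref{fast}, Waiter builds long paths by repeated doubling: given $m_t$ pairwise disjoint Client paths on $2^t$ vertices each, with all edges between distinct endpoints still free, he splits a set of endpoints into two halves $X_t,Y_t$ and forces a Client matching of size roughly $m_t^2/(8q)$ between them by repeatedly offering $q+1$ free $X_t$--$Y_t$ edges avoiding the matching built so far; each matching edge concatenates two paths into one on $2^{t+1}$ vertices. Iterating $t^*$ times with $2^{t^*}=\Theta(\ln n)$ (which is feasible as long as the number of surviving paths stays above $q^{2/3}$) yields a Client path, hence a component, on $\Theta(\ln n)$ vertices. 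If you want to salvage a weight-function approach, the correct tool for Waiter is the ``building via blocking'' criterion of Theorem~\ref{cpwin}, applied to a family whose every transversal contains a long path; but as stated your argument does not establish the invariant $\Phi_t\ge 1$.
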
 

Next, we consider the connectivity game, that is, the game in which Waiter's goal is to ensure $\comp(n,q) = n$. Similarly to the Avoider-Enforcer connectivity game, played under strict rules~\cite{HKSae}, we determine the exact threshold bias for the Waiter-Client version. 

\begin{theorem} \label{th::connectivity}
For every integer $n \geq 4$, Waiter can force Client to build a connected subgraph of $K_n$ if and only if $q \leq \lfloor n/2 \rfloor - 1$. 
\end{theorem}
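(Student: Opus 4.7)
The proof splits into two implications.

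\emph{Direction 1} ($q \ge \lfloor n/2 \rfloor \Rightarrow$ Client wins): This is an edge-counting argument. Client receives exactly one edge per complete round, so his graph has at most $\lfloor \binom{n}{2}/(q+1) \rfloor$ edges. For $q \ge \lfloor n/2 \rfloor$, a direct computation (treating $n$ even and $n$ odd separately) shows this bound is at most $n-2$, strictly less than $n-1$, so Client's graph has too few edges to span all of $K_n$ and hence cannot be connected, regardless of how Client plays.

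\emph{Direction 2} ($q \le \lfloor n/2 \rfloor - 1 \Rightarrow$ Waiter wins): For this harder direction I would exhibit an explicit adaptive strategy for Waiter that forces Client's graph to contain a spanning tree. The overarching idea is to keep Client's graph as a spanning forest throughout the game and, at every round, offer $q+1$ unclaimed edges chosen so that Client's pick necessarily merges two of its components; after $n-1$ such mergers the forest becomes a spanning tree. Concretely, I would split the strategy into two phases: a \emph{building} phase in which Waiter offers star-like sets of $q+1$ edges at carefully chosen centers in order to grow one large connected component in Client's graph, and an \emph{absorbing} phase in which Waiter repeatedly locates a vertex $v$ outside this component with at least $q+1$ unclaimed edges into the component and offers precisely those edges, so that Client's only option is to attach $v$ to the component.

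The main obstacle is showing that the absorbing phase can always proceed: at every round there must still exist an outside vertex with at least $q+1$ unclaimed edges into Client's current component. This requires a careful, adaptive choice of Waiter's building-phase offerings, since the $q$ edges Waiter takes per round erode the pool of usable edges at each prospective absorbee, and Client may try to disrupt a naive ``extend along a fixed vertex ordering'' strategy by picking against the intended order. The tightest case is $n$ even with $q = \lfloor n/2\rfloor - 1 = n/2 - 1$, where Client collects exactly $n-1$ edges and the spanning tree must emerge with no slack at all; the $n$ odd case allows one extra Client edge and is correspondingly easier. Verifying that Waiter's chosen strategy succeeds against every possible Client response --- in particular, controlling how many edges at each outside vertex are already claimed when the absorbing phase reaches it --- is the key technical step.
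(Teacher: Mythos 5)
Your first direction is correct and is exactly the paper's argument: for $q \ge \lfloor n/2\rfloor$, Client ends the game with at most $\left\lfloor\binom{n}{2}/(q+1)\right\rfloor \le n-2$ edges, so his graph cannot be connected. The second direction, however, is a plan rather than a proof, and the step you explicitly defer is the entire difficulty. You correctly name the obstacle --- guaranteeing that whenever the absorbing phase turns to an outside vertex there are still at least $q+1$ free edges from it into the current component --- but you give no mechanism that secures this, and a naive building phase really does fail here. In the tight case $n$ even, $q=n/2-1$, the paper's first phase produces a Client tree on only $n-q-1=n/2$ vertices, so an outside vertex has exactly $q+1$ edges into that tree to begin with; a single Waiter-claimed edge at the first vertex to be absorbed already makes the absorbing step impossible. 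So the feasibility of the absorbing phase hinges on how Waiter's own $q$ edges per round were distributed during the building phase, and that is what must be proved.

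The paper (via Theorem~\ref{BigCompSup}) resolves this with an explicit invariant that your sketch lacks. During the building phase Waiter grows Client's tree $T_i$ by one vertex per round while simultaneously spreading his own claimed edges over the outside vertices according to a maintained ordering $u_1,\dots,u_{q+1}$: after round $i$ one has $d_{G_W}(u_j,V(T_i))=j-1$ for small $j$, $=0$ for a middle range, and $=i$ for a reserved block $u_{n-q},\dots,u_{n-i-1}$ (Property~(c$'$)). The offer in round $i+1$ consists of all free edges from $\{u_{n-q},\dots,u_{n-i-1}\}\cup\{u_{i+2}\}$ to $V(T_i)$, which has size exactly $((n-i-1)-(n-q)+1)+ (i+1)=q+1$ and advances the staircase in lockstep. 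This invariant is then precisely what makes the absorbing count close: when $u_i$ is absorbed it has at least $n-q-i$ free edges to the tree, $i-1$ free edges to $u_1,\dots,u_{i-1}$, and the reserved block supplies $2(q+1)-n$ more, totalling $q+1$. Without this (or an equivalent) bookkeeping device, your strategy cannot be verified against an adversarial Client, so as written the hard direction has a genuine gap.
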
 

It is interesting to note that, exactly as with strict Avoider-Enforcer games (see Theorem 1.5 in~\cite{HKSae}), as soon as Client has $n-1$ edges, his graph is forced to be connected. On the other hand, for Maker-Breaker games~\cite{GSz}, monotone Avoider-Enforcer games~\cite{HKSSae} and random graphs~\cite{Bol,JLR}, connectivity requires $(1/2 + o(1)) n \ln n$ edges. 

\bigskip 

In random graph theory for a long time the threshold for Hamiltonicity had been suspected to be very close to the connectivity threshold but, not surprisingly, dealing with Hamiltonicity turned out to be much harder than handling connectivity. The question of finding the threshold for the existence of a Hamiltonian path had been raised already in the seminal paper of Erd\H{o}s and R\'enyi in 1960. It took sixteen years to determine this threshold up to a constant factor (see \cite{Posa}), and another seven to find it exactly (see {\cite{AKS, Bham, KSz}). 

An analogous question concerning Maker-Breaker games was raised in 1978 by Chv\'atal and Erd\H{o}s~\cite{CE}. To make it more precise, let us define the \emph{threshold bias} of the game $\MB(n,q,{\mathcal A})$ or $\WC(n,q,{\mathcal A})$, as the smallest integer $q = q(n)$ for which Breaker (respectively Client) has a winning strategy for this game. Let ${\mathcal H} = {\mathcal H}(n)$ denote the family of all Hamiltonian graphs on the vertex set $[n]$. The threshold bias of the Maker-Breaker Hamiltonicity game $\MB(n,q,{\mathcal H})$ was determined up to a constant factor by Beck~\cite{beckham}, and computed up to a factor of $1+o(1)$ by Krivelevich~\cite{kriv} only in 2011. The asymptotic value of this threshold bias, anticipated already by Chv\'atal and Erd\H{o}s, follows the probabilistic intuition -- the threshold bias turns out to be $\left(1+o(1)\right)n/\ln n$ and it corresponds precisely to the Hamiltonicity threshold in the evolution of the random graph $G\left(n, \left\lceil \binom{n}{2}/(q+1) \right\rceil \right)$. Let us also mention that the threshold bias of the Hamiltonicity Avoider-Enforcer game, played under monotone rules, has the same value asymptotically (\cite{HKSSae}, \cite{cover}). 

On the contrary, the probabilistic intuition fails for the Waiter-Client Hamiltonicity game. Our main result is that the threshold bias of the game $\WC(n,q,{\mathcal H})$ is of linear order. In fact we prove an even stronger result, namely, that there exists a small positive constant $c$ such that Waiter can force Client to build a pancyclic graph (i.e., a graph admitting a cycle of length $k$ for every $3 \leq k \leq n$) whenever $q \leq c n$.

\begin{theorem} \label{th::pancyclic}
Let $n$ be a sufficiently large integer and let $q = q(n)$ be an integer. Then, playing a $(q : 1)$ Waiter-Client game on $E(K_n)$, the following hold: 
\begin{enumerate}[{\bf(i)}]
\item 
If $q \geq 1.1 n$, then Client has a strategy to keep his graph acyclic throughout the game. 
\item 
There exists a positive constant $c$ such that, if $q \leq c n$, then Waiter has a strategy to ensure that, at the end of the game, Client's graph will be pancyclic. 
\end{enumerate}
\end{theorem}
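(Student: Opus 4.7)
For Part~(i), the plan is that Client maintains his graph as a forest via a balancing strategy: in each round he chooses an offered edge that does not close a cycle and, subject to this, merges two smallest components. Since $q \ge 1.1 n$, Client plays at most $\lceil \binom{n}{2}/(q+1) \rceil \le n/2.2 + 1$ edges, so a balanced forest has most components of size $1$ or~$2$. Tracking the potential $\Phi = \sum_i \binom{|C_i|}{2}$ over Client's components, the key inductive claim is that $\Phi \le q + t$ holds after each of Client's $t$ moves, which guarantees that the number of unclaimed ``bad'' edges (those with both endpoints in one component of the current forest) stays at most $q$, so at least one of the $q+1$ offered edges is always good.

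Part~(ii) is the main challenge. My plan is a three-phase Waiter strategy. In Phase~1, Waiter uses standard ``box-game'' sub-routines to force Client's graph to have minimum degree at least a constant $d$ and an expansion property, namely that every set $S$ with $|S|\le k=\Omega(n)$ satisfies $|N_{G_C}(S)|\ge 2|S|$. Forcing degree and expansion costs linear bias, which fits within $q \le cn$ for a small enough constant $c$. In Phase~2, following the booster methodology of P\'osa and Krivelevich~\cite{kriv}, the expansion supplies $\Omega(n^2)$ booster edges whose addition to Client's graph would complete a Hamilton cycle; Waiter offers $q+1$ boosters at once, forcing Client to accept one, thereby producing a Hamilton cycle $C$ in Client's graph.

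Phase~3 handles the remaining cycle lengths $\ell \in \{3,\ldots,n-1\}$. For each $\ell$, the expansion of Client's graph together with the Hamilton cycle yields many chords of $C$ whose addition creates a cycle of length exactly $\ell$ in Client's graph; offering $q+1$ such chords forces Client to take one. Iterating over $\ell$ (say from large to small) yields cycles of every length from $3$ to $n$, hence pancyclicity.

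The main obstacle I anticipate is Phase~3, specifically supplying enough unclaimed $\ell$-creating chords for every $\ell$, including the very short ones. Short cycles correspond to chords joining cycle-vertices at small distance, of which the Hamilton cycle admits only $n$ choices each, and ensuring that enough of these remain unclaimed after Phases~1 and~2 is delicate. Sequencing $\ell$ carefully so that chords used for one length do not preempt the forcing of another, and reserving dedicated edge slots at the outset for each cycle length, is the technical crux; balancing these needs forces the constant $c$ in $q \le c n$ to be small.
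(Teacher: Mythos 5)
Your Client strategy hinges on the inductive claim that $\Phi=\sum_i\binom{|C_i|}{2}\le q+t$ after $t$ moves, but the induction step does not go through: merging components of sizes $a$ and $b$ increases $\Phi$ by $ab$ while $t$ increases only by $1$, so the invariant is preserved only when both components are singletons. Waiter can force non-singleton merges essentially at will, by offering $q+1$ edges all incident to already non-singleton components (indeed, the doubling strategy of Proposition~\ref{conn:sparse} forces a component of order $\Theta(\ln n)$ even at $q=1.1n$, against \emph{any} Client strategy). So the invariant is not self-sustaining, and the bound on the number of free intra-component edges --- which is exactly what guarantees that some offered edge is ``good'' --- is not established. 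It may be that the greedy strategy still works, but that would require a genuinely different accounting. The paper's proof of part (i) avoids designing an explicit strategy altogether: it applies the Erd\H{o}s--Selfridge-type criterion (Theorem~\ref{BESPC}) to the family of \emph{all} cycles of $K_n$ and verifies that $\sum_{k\ge 3}\binom{n}{k}\frac{(k-1)!}{2}(q+1)^{-k}<1$ once $q\ge 1.1n$.

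\textbf{Part (ii).} Your outline shares the paper's core (force an expander, then use P\'osa boosters for Hamiltonicity), but the step you yourself flag as the crux --- Phase 3 --- is precisely where the proof is missing, and it cannot be patched as described. For a fixed $\ell$ there are only $n$ chords of the Hamilton cycle at cyclic distance $\ell-1$, while Phases 1--2 last $\Theta(n)$ rounds and hence hand Waiter $\Theta(n^2)$ edges, a constant fraction of $E(K_n)$; since the vertex ordering of the Hamilton cycle is only determined \emph{after} Phase 2, there is no way to ``reserve'' the distance-$\ell$ chords in advance, and for some $\ell$ fewer than $q+1$ of them may remain free. The paper resolves this by reversing the order and changing the mechanism: it first forces half-expanders between six \emph{fixed} vertex classes, extracts (via a DFS argument, Lemma~\ref{lem::longPath}) a long path that traverses the classes in a prescribed cyclic pattern, and then creates each short cycle length in a single round by offering $q+1$ prescribed edges that are guaranteed free because they lie in bipartite pieces of $K_n$ untouched by all earlier stages (Lemma~\ref{exp_cyc}); long cycles are obtained not from chords but by joining two Hamiltonian expanders and using the $\Omega(n)$ rotation endpoints of Hamilton paths in the smaller one (Lemma~\ref{lem::manyEndPoints}). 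Finally, note that forcing the expansion property at linear bias is itself not a routine box-game subroutine: it is the paper's key new contribution and rests on the transversal-family criterion (Theorem~\ref{cpwin}) applied to all pairs of large sets, plus a repair step for small non-expanding sets. As written, your part (ii) is an outline with the decisive steps unproved.
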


It readily follows from Theorems~\ref{th::pancyclic} and~\ref{th::connectivity} that the threshold biases for the Hamiltonicity and pancyclicity Waiter-Client games are of the same order. Thus, in a way, the probabilistic intuition does not fail completely in this case, since in the random graph $G\left(n, \left\lceil \binom{n}{2}/(q+1) \right\rceil \right)$ the thresholds for Hamiltonicity and pancyclicity are of the same order (see \cite{CF} and \cite{L}). In this respect Maker-Breaker games prove quite different: the threshold bias for pancyclicity is close to $\Theta(\sqrt{n})$ (see \cite{pancMB}), whereas the threshold bias for Hamiltonicity is $\left(1+o(1)\right)n/\ln n$.

It readily follows from Theorems~\ref{th::connectivity} and~\ref{th::pancyclic} that the threshold bias of the Waiter-Client connectivity game and the threshold bias of the Waiter-Client Hamiltonicity game are of the same order of magnitude. As noted above, an even stronger connection holds for the Maker-Breaker versions of these games. Namely, the threshold biases of both games are asymptotically equal. Combined with Theorem~\ref{th::connectivity}, our next result shows that this is not the case with the Waiter-Client versions of these games.    

\begin{proposition} \label{prop::smallerThreshold}
Let $n$ be a sufficiently large integer and let $q = q(n) \geq 0.49n$ be an integer. Then, playing a $(q : 1)$ Waiter-Client game on $E(K_n)$, Client can ensure that, at the end of the game, the minimum degree in his graph will be at most one.   
\end{proposition}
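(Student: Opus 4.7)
The plan is to equip Client with a strategy that controls the evolution of the sets $V_0,V_1,V_2$ of vertices whose current Client-degree is $0$, $1$, and at least $2$, respectively. The objective is to keep $|V_2|<n$ at the end of the game, which is equivalent to having some vertex of Client-degree at most one. Note that since $q\ge 0.49n$, Client's final graph has at most $\lfloor\binom{n}{2}/(q+1)\rfloor\le 1.03\,n$ edges, so the degree-sum budget leaves very little room for Waiter to push all degrees up to $2$.

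In each round Client's preference order is: (i) an offered edge with no endpoint in $V_1$ (so $|V_2|$ is unchanged); (ii) failing that, an edge with exactly one endpoint in $V_1$ (so $|V_2|$ grows by $1$); (iii) only as a last resort, an edge with both endpoints in $V_1$ (so $|V_2|$ grows by $2$). Within (i) Client prefers $V_2V_2$, then $V_0V_2$, then $V_0V_0$; within (ii) he prefers $V_1V_2$ over $V_0V_1$. These secondary preferences are designed to slow the growth of $|V_1|$, since $V_2V_2$ and $V_1V_2$ picks leave $|V_1|$ unchanged or shrink it, whereas $V_0V_0$ picks would inflate $|V_1|$ by $2$.

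The analysis has three parts: (a) as long as $|V_1|<\sqrt{2(q+1)}\sim\sqrt n$, Waiter cannot offer $q+1$ edges all of type $V_1V_1$, so case (iii) never occurs; (b) with the secondary preferences above, Client can maintain this bound on $|V_1|$ throughout the game (this is where the numerical condition $q\ge 0.49n$ is used); (c) every case-(ii) round consumes $q+1$ edges touching $V_1$, and a counting of the total ``$V_1$-touching offerings'' combined with the round budget $\binom{n}{2}/(q+1)\le 1.03\,n$ shows the number of such rounds is strictly less than $n$, so $|V_2|<n$ at the end. The main obstacle is step (b): in the early game $V_2$ is empty and $V_0V_0$ is often the only ``good'' option, which threatens to push $|V_1|$ above the threshold $\sqrt{2(q+1)}$; making the tradeoff go through requires a careful case-by-case analysis of Waiter's possible offerings, which is exactly where the constant $0.49$ appears.
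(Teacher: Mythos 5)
Your argument hinges on step (b), and step (b) is not merely unproven --- it is false. In the opening rounds $V_2=\emptyset$, and Waiter can simply offer $q+1$ free edges lying entirely inside the current $V_0$ (for instance a matching there, or any $q+1$ such edges when $q+1>n/2$). Whichever edge Client selects, both of its endpoints move from $V_0$ to $V_1$, so $|V_1|$ increases by exactly $2$ in that round, and no preference order can prevent this since every offered edge is of type $V_0V_0$. Since $|V_0|$ shrinks by only $2$ per round, Waiter can repeat this for, say, $\sqrt{n}$ rounds (there are still $\binom{n-2\sqrt{n}}{2}-O(n^{3/2})$ free edges inside $V_0$ at that point), driving $|V_1|$ up to $2\sqrt{n}$, which exceeds $\sqrt{2(q+1)}\le\sqrt{n+1}$ in the only nontrivial range $0.49n\le q<(n-1)/2$ (for $q\ge (n-1)/2$ Client ends with fewer than $n$ edges and the proposition is immediate). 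So the ``careful case-by-case analysis'' you defer is not a technicality: the invariant you need cannot hold. Step (c) is also unsubstantiated as stated: because vertices enter and leave $V_1$ over time, the total number of offered edges that touch $V_1$ at the moment of offering can only be bounded by roughly $n(n-1)$ (each vertex contributes up to $n-1$ edges during its sojourn in $V_1$), which caps the number of case-(ii) rounds at about $n(n-1)/(q+1)\approx 2n$, well above the bound $<n$ you need.

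For comparison, the paper's proof sidesteps degree bookkeeping on all vertices and instead isolates a single winnable vertex. Client plays arbitrarily until the set $U$ of non-isolated vertices of $G_C$ reaches size $k\approx n/4$; this takes $t\ge k/2$ rounds, during which Waiter claims $tq$ edges of which at most $\binom{k}{2}-k/2$ lie inside $U$. Averaging the remainder over the $n-k$ still-isolated vertices shows some isolated $x$ has $d_{G_W}(x)>n-2q$ (this is where $q\ge 0.49n$ enters), hence fewer than $2(q+1)$ free edges remain at $x$. Waiter can then force Client to touch $x$ at most once more, so Client, refusing edges at $x$ whenever possible, finishes with $d_{G_C}(x)\le 1$. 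If you wish to rescue a degree-sequence approach, you would need an invariant that survives the $V_0V_0$-matching attack above; tracking only $|V_1|$ does not.
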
 

To end this section, let us briefly comment on the techniques we use to prove Theorem \ref{th::pancyclic}. We prove and use various properties of expanders and apply P\'osa's ingenious extension-rotation technique. A key novelty of our proof is Lemma~\ref{exp_cyc}, which asserts that Waiter can force Client to build a spanning subgraph of $K_n$ with good expanding properties, even when playing with a linear bias. This lemma is of independent interest and may have additional applications. For example, one can apply a well-known theorem of Friedman and Pippenger~\cite{fried} to prove that, playing with a linear bias, Waiter can force Client to build a \emph{tree-universal} graph. That is, for every positive integer $d$, there are positive constants $c_1$ and $c_2$ (depending on $d$) such that, if $q \leq c_1 n$, then playing a $(q : 1)$ game on $E(K_n)$, Waiter can force Client to build a graph which admits a copy of every tree with at most $c_2 n$ vertices and maximum degree at most $d$.

\section{Preliminaries}

\noindent For the sake of simplicity and clarity of presentation, we do not make a par\-ti\-cu\-lar effort to optimize the constants obtained in some of our proofs. Most of our results are asymptotic in nature and whenever necessary we assume that the number of vertices $n$ is sufficiently large. Our graph-theoretic notation is standard and follows that of~\cite{West}. In particular, we use the following.

For a graph $G$, let $V(G)$ and $E(G)$ denote its sets of vertices and edges respectively, and let $v(G) = |V(G)|$ and $e(G) = |E(G)|$. For disjoint sets $A,B \subseteq V(G)$, let $E_G(A,B)$ denote the set of edges of $G$ with one endpoint in $A$ and one endpoint in $B$, and let $e_G(A,B) = |E_G(A,B)|$. For a vertex $u \in V(G)$ and a set $B \subseteq V(G)$ we abbreviate $E_G(\{u\}, B)$ under $E_G(u, B)$. For a set $S \subseteq V(G)$, let $G[S]$ denote the subgraph of $G$ which is induced on the set $S$. For sets $A, B \subseteq V(G)$, let $N_G(A, B) = \{v \in B \setminus A : \exists u \in A \textrm{ such that } uv \in E(G)\}$ denote the set of neighbors of $A$ in $B \setminus A$. For a vertex $u \in V(G)$ and a set $B \subseteq V(G)$ we abbreviate $N_G(\{u\}, B)$ under $N_G(u, B)$ and let $d_G(u, B) = |N_G(u, B)|$ denote the \emph{degree} of $u$ in $B$. We abbreviate $N_G(A, V(G))$, $N_G(u, V(G))$ and $d_G(u, V(G))$ under $N_G(A)$, $N_G(u)$ and $d_G(u)$, respectively. Often, when there is no risk of confusion, we omit the subscript $G$ from the notation above. A path in a graph is said to be \emph{non-trivial} if it contains at least one edge. The \emph{circumference} of a graph $G$ is the length of a longest cycle in $G$ (if $G$ has no cycles, then its circumference is set to be infinity).    

Let $X$ be a finite set and let ${\mathcal F}$ be a family of subsets of $X$. The \emph{transversal} family of ${\mathcal F}$ is ${\mathcal F}^* := \{A \subseteq X : A \cap B \neq \emptyset \textrm{ for every } B \in {\mathcal F}\}$. 

Assume that some Waiter-Client game, played on the edge-set of some graph $H = (V,E)$, is in progress (in some of our arguments, we will consider games played on graphs other than $K_n$; a formal definition of such games will be given in the next paragraph). At any given moment during this game, let $G_W = (V, E_W)$ denote the graph spanned by Waiter's edges, let $G_C = (V, E_C)$ denote the graph spanned by Client's edges and let $G_F = (V, E_F)$, where $E_F = E \setminus (E_W \cup E_C)$. The edges of $E_F$ are called \emph{free}. 
  

\bigskip

We would like to state two known game-theoretic results which will be used repeatedly in this paper. Both are instances of the well-known potential function method which was introduced by Erd\H{o}s and Selfridge~\cite{ES} and further developed by Beck (\cite{beckwf, TTT}). In order to do so, we need the following more general definition of Waiter-Client games, played on a general board $X$. Given a positive integer $q$, a finite set $X$ and a family ${\mathcal F}$ of subsets of $X$, the Waiter-Client game $\WC(X,{\mathcal F},q)$ is defined as follows. In each round, Waiter chooses $q+1$ free elements of $X$ and offers them to Client, who then chooses one of them which he keeps and the remaining $q$ elements are claimed by Waiter. If at some point there are less than $q+1$ free elements of $X$ left, then Waiter claims all of them. Waiter wins $\WC(X,{\mathcal F},q)$ if, by the end of the game, he is able to force Client to claim all elements of some $A \in {\mathcal F}$; otherwise Client wins the game.

As observed by Beck~\cite{TTT}, one can adapt the potential function method of Erd\H{o}s and Selfridge, which is based on the derandomization of the first moment method, to obtain the following result.

\begin{theorem}[implicit in~\cite{TTT}]\label{BESPC} 
Let $q$ be a positive integer, let $X$ be a finite set, let ${\mathcal F}$ be a family of (not necessarily distinct) subsets of $X$ and let $\Phi(\mathcal{F}) = \sum_{A \in \mathcal{F}} (q+1)^{-|A|}$. Then, playing the Waiter-Client game $\WC(X, {\mathcal F}, q)$, Client has a strategy to avoid fully claiming more than $\Phi(\mathcal{F})$ sets in ${\mathcal F}$.  
\end{theorem}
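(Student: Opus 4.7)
The plan is to adapt the classical Erd\H{o}s--Selfridge potential argument to the Waiter-Client setting. For a set $A \in \mathcal{F}$, at any state of the game, let $C \subseteq X$ denote Client's current elements and $W \subseteq X$ Waiter's current elements; define the potential of $A$ to be
\[
\phi(A) = \begin{cases} (q+1)^{-(|A| - |A \cap C|)} & \text{if } A \cap W = \emptyset, \\ 0 & \text{otherwise.} \end{cases}
\]
Let $T = \sum_{A \in \mathcal{F}} \phi(A)$. Observe that initially $T = \Phi(\mathcal{F})$, and at the very end of the game each element of $X$ is held by exactly one player, so $\phi(A) = 1$ if Client fully claimed $A$ and $\phi(A) = 0$ otherwise; hence the final value of $T$ equals the number of fully claimed sets.

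Client's strategy is to minimize $T$ greedily. Suppose Waiter offers the tuple $\{x_1, \dots, x_{q+1}\}$ of free elements. For $j \in [q+1]$ set
\[
T_j = \sum_{A \in \mathcal{F}} \phi(A) \cdot \mathbf{1}\bigl[A \cap \{x_1,\dots,x_{q+1}\} = \{x_j\}\bigr],
\]
and let Client pick $x_i$ with $i$ minimizing $T_i$. One checks that the change in total potential is
\[
T' - T = (q+1)T_i - \sum_{A : A \cap \{x_1,\dots,x_{q+1}\} \neq \emptyset} \phi(A),
\]
because a set $A$ that intersects the offered tuple in a single element $x_j$ either has its potential killed (if $j \neq i$) or multiplied by $q+1$ (if $j = i$), while any set with $|A \cap \{x_1,\dots,x_{q+1}\}| \geq 2$ is necessarily killed (since Waiter takes at least one of its elements). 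By the averaging inequality $(q+1) T_i \leq \sum_{j=1}^{q+1} T_j \leq \sum_{A : A \cap \{x_1,\dots,x_{q+1}\} \neq \emptyset} \phi(A)$, hence $T' \leq T$.

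The remaining detail is the possibly truncated last round in which fewer than $q+1$ free elements remain; there Waiter claims all of them, so any affected $A$ drops to potential $0$ and $T$ can only decrease. Combining this with the previous paragraph yields $T \leq \Phi(\mathcal{F})$ throughout, and in particular at the end. Since the final value of $T$ equals the number of sets of $\mathcal{F}$ that Client has fully claimed, this quantity is at most $\Phi(\mathcal{F})$, as required.

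The only mildly delicate point in the argument is bookkeeping for the sets $A$ with $|A \cap \{x_1,\dots,x_{q+1}\}| \geq 2$, since they contribute to the decrement of $T$ independently of Client's choice; this is precisely what makes the averaging inequality a comfortable slack rather than a tight equality, and it is what I expect to be the one line that needs to be written carefully.
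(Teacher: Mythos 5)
Your proof is correct and is exactly the Erd\H{o}s--Selfridge/Beck potential argument that the paper invokes (and omits): the paper states that Theorem~\ref{BESPC} follows by ``essentially repeating the argument used by Beck to prove his classic winning criterion for Breaker in biased Maker-Breaker games,'' which is precisely what you have written out, with the correct potential $(q+1)^{-|A\setminus C|}$ for untouched sets, the correct accounting of the round (sets meeting the offer in $\ge 2$ elements are always killed, giving slack in the averaging step), and the correct handling of the truncated final round.
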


One can prove this theorem by essentially repeating the argument used by Beck to prove his classic winning criterion for Breaker in biased Maker-Breaker games (cf.~\cite{beckwf} or \cite{TTT}); the proof is therefore omitted. 

The following potential type result is a rephrased version of Corollary 1.5 from~\cite{pc}.

\begin{theorem}[\cite{pc}]\label{cpwin}
Let $q$ be a positive integer, let $X$ be a finite set and let ${\mathcal F}$ be a family of subsets of $X$. If
$$
\sum_{A \in {\mathcal F}} 2^{-|A|/(2q-1)} < \frac{1}{2} \,,
$$
then Waiter has a winning strategy for the game $\WC(X, {\mathcal F}^*, q)$.
\end{theorem}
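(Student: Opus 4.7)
The plan is to follow an Erdős--Selfridge style potential argument, specialized to the Waiter--Client framework. For a game position with Client's claimed set $E_C$ and remaining free set $E_F$, I define
$$\Phi \;:=\; \sum_{\substack{A \in \mathcal{F}\\ A \cap E_C = \emptyset}} 2^{-|A \cap E_F|/(2q-1)}.$$
Initially $\Phi_0 = \sum_{A \in \mathcal{F}} 2^{-|A|/(2q-1)} < 1/2$. If at the end of the game some $A \in \mathcal{F}$ has never been intersected by Client, then $A \subseteq E_W$, so $|A \cap E_F| = 0$ and this $A$ contributes $2^{0} = 1$ to $\Phi$. Hence, to guarantee that Client is forced to claim all elements of some set in $\mathcal{F}^*$---equivalently, to hit every $A \in \mathcal{F}$---it suffices to maintain $\Phi < 1$ throughout the game.

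The crucial combinatorial input is the elementary inequality
$$2^{-s/(2q-1)} \;\ge\; 1 - \frac{s}{q+1} \qquad \text{for integers } 1 \le s \le q+1,$$
which I would verify via convexity of the function $s \mapsto 2^{-s/(2q-1)} - (1 - s/(q+1))$ together with the boundary case $s = q = 1$, where equality holds. Writing $u_A = 2^{-|A \cap E_F|/(2q-1)}$ for the weight of an alive set $A$ and $s_A = |A \cap O|$ given Waiter's offered set $O = \{x_1, \ldots, x_{q+1}\}$, a direct computation expresses the change in $\Phi$ when Client picks $x_k$ as
$$\Delta_k \;=\; T \;-\; \sum_{\substack{A \ni x_k\\ A \text{ alive}}} u_A \cdot 2^{s_A/(2q-1)}, \qquad \text{where } T \;:=\; \sum_{\substack{A \text{ alive}\\ s_A \ge 1}} u_A \left(2^{s_A/(2q-1)} - 1\right).$$
Applied term by term, the key inequality yields $\sum_{k=1}^{q+1} \Delta_k \le 0$, so that the average change over Client's possible responses is non-positive. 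Upgrading this average bound to a pointwise bound against Client's worst response is the main conceptual step, and it requires Waiter to pick $O$ carefully: the standard move is a greedy element-by-element construction of $O$ using the ``danger'' weights $u_A \cdot 2^{s_A/(2q-1)}$, ensuring that no offered element has danger below $T$, which gives $\max_k \Delta_k \le 0$ and hence non-increase of $\Phi$ in each regular round.

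The final issue, and the reason for the factor $1/2$ in the hypothesis rather than the apparently natural $1$, is the endgame. When fewer than $q+1$ free elements remain, Waiter must claim all of them at once; during that final sweep, a still-alive $A$ whose $|A \cap E_F|$ drops to $0$ has its weight $u_A$ jump from at least $2^{-q/(2q-1)}$ up to $1$. A careful bookkeeping of this final move shows that $\Phi$ can at worst grow by a factor of $2$, so that $\Phi_{\mathrm{final}} < 2 \Phi_0 < 1$. By the earlier observation this forces every $A \in \mathcal{F}$ to have been hit by Client, completing Waiter's victory in $\WC(X, \mathcal{F}^*, q)$.
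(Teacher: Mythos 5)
First, a point of reference: the paper does not prove this statement at all --- it is quoted from \cite{pc} (it is Corollary 1.5 there) --- so your argument has to stand on its own. Much of your setup is sound: the potential $\Phi$, the identity $\Delta_k = T - D_{x_k}$ with $D_{x_k}=\sum_{A\ni x_k}u_A2^{s_A/(2q-1)}$, the inequality $2^{-s/(2q-1)}\ge 1-s/(q+1)$ for integers $1\le s\le q+1$ (which is indeed exactly where the exponent $2q-1$ comes from, and which yields $\sum_{k}\Delta_k\le 0$ for any offered set $O$), and the observation that the final sweep can at worst double $\Phi$ because each surviving weight is already at least $2^{-q/(2q-1)}\ge 1/2$.

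The gap is the passage from $\sum_k\Delta_k\le 0$ to $\max_k\Delta_k\le 0$, which is the entire difficulty of the theorem and which you dispose of in one sentence. The greedy you gesture at is not well defined: both the threshold $T$ and the dangers $D_{x_k}$ depend on the whole set $O$ through the multiplicities $s_A=|A\cap O|$, and adjoining a further element to $O$ raises $T$ while possibly leaving the dangers of the previously selected elements unchanged, so the invariant ``no offered element has danger below $T$'' can be destroyed by the very next greedy step; you give no argument that the process completes with $q+1$ elements all meeting the bar. Worse, a safe $O$ need not exist under the invariant you actually state. You claim it suffices to maintain $\Phi<1$; but a position with a single alive set $A$ having $|A\cap E_F|=1$ has $\Phi=2^{-1/(2q-1)}<1$ and is already lost for Waiter (Client declines the unique remaining element of $A$ whenever it is offered, and eventually it lands in $E_W$), so every offer from such a position has some $\Delta_k>0$ and ``$\Phi<1$'' is not self-sustaining. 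Consequently the constant $1/2$ cannot be charged entirely to the endgame: its essential role is that $\Phi<1/2$ forces every alive set to retain at least $2q$ free elements (since $2^{-m/(2q-1)}\ge 1/2$ whenever $m\le 2q-1$), which is what gives Waiter any room to build an offer at all. Even granting that, one must still exhibit an $O$ with $\min_kD_{x_k}\ge T$ when the alive sets overlap: for instance, offering $q+1$ free elements of the heaviest alive set kills it but inflates every overlapping set, and an offered element lying in few other sets can then have $D_{x_k}<T$. Until this selection step is actually carried out, the proof is incomplete.
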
 

In other words, Theorem~\ref{cpwin} asserts that Waiter can force Client to claim at least one element from every set $A \in {\mathcal F}$. It allows us to use Beck's well-known \emph{building via blocking} approach.  

Finally, let us remark that offering more board elements in a round cannot help Waiter. Therefore, we will sometimes allow Waiter to do so. Formally, we have the following slightly stronger result. 

\begin{proposition} \label{fast} 
Let $q \leq q'$ be positive integers, let $X$ be a finite set and let ${\mathcal F}$ be a family of subsets of $X$. If Waiter has a winning strategy for the $\WC(X,{\mathcal F},q')$ game, then he also has a strategy to win the game $\WC(X,{\mathcal F},q)$ in at most $\lfloor |X|/(q'+1) \rfloor$ rounds.
\end{proposition}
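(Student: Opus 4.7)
The plan is to simulate Waiter's higher-bias strategy inside the real $(q:1)$ game. Let $S'$ be a winning strategy for Waiter in $\WC(X,\cF,q')$. While playing the real $(q:1)$ game, Waiter will also run a ``virtual'' copy of the $(q':1)$ game on the same board $X$ and target family $\cF$, maintaining the invariant that Client's claimed elements in the virtual game coincide with those in the real game, while Waiter's claimed elements in the virtual game contain those of Waiter in the real game. In particular, every element that is free in the virtual game will also be free in the real game.

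To maintain the invariant in each round, Waiter consults $S'$ for the set $F$ of $q'+1$ elements it would next offer in the virtual game; by the invariant these are free in the real game as well, so Waiter picks any $q+1$ of them and offers those to Client. Once Client selects some $x \in F$, Waiter updates both games: in the virtual game, $x$ is assigned to Client and the remaining $q'$ elements of $F$ are assigned to Waiter; in the real game, $x$ goes to Client, the $q$ elements that were actually offered (other than $x$) go to Waiter, and the remaining $q'-q$ elements of $F$ stay free. This precisely preserves the invariant.

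Because $S'$ forces Client in the virtual game to claim all elements of some $A \in \cF$, and Client's sets agree in the two games throughout, Waiter wins the real game as well. For the round count, observe that the virtual $(q':1)$ game consists of at most $\lfloor |X|/(q'+1) \rfloor$ regular rounds in which Client makes a choice, possibly followed by a single cleanup round in which fewer than $q'+1$ free elements remain and Waiter claims all of them without any input from Client. Since this cleanup round does not change Client's set, $S'$'s win is in fact secured by the end of the regular rounds; and since each regular virtual round corresponds to exactly one round of the real game, Waiter wins in at most $\lfloor |X|/(q'+1) \rfloor$ real rounds.

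The main (minor) obstacle is the careful handling of the cleanup round: without exploiting the fact that it leaves Client's set unchanged, one would only obtain the weaker bound $\lceil |X|/(q'+1) \rceil$. Everything else reduces to bookkeeping, and the trivial inequality $q+1 \leq q'+1$ ensures that $q+1$ free elements are always available to offer in the real game.
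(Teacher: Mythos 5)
Your proof is correct and follows essentially the same approach as the paper's: simulating the $(q':1)$ strategy inside the real game, with the paper's phrase ``views the elements of $A'\setminus A$ as if he claimed them'' being exactly your virtual-game bookkeeping, and the round count coming from the fact that each simulated round consumes at least $q'+1$ board elements. Your explicit treatment of the final cleanup round is a slightly more careful rendering of a point the paper leaves implicit, but the argument is the same.
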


\begin{proof}
Let $S'$ be a winning strategy for Waiter in $\WC(X,{\mathcal F},q')$. We will use $S'$ to devise a winning strategy $S$ for Waiter in  $\WC(X,{\mathcal F},q)$ as follows. In every round of the game $\WC(X,{\mathcal F},q)$, Waiter consults $S'$. When instructed to offer the elements of some set $A' \subseteq X$, he offers the elements of some arbitrary set $A \subseteq A'$ of size $q+1$ and views the elements of $A' \setminus A$ as if he claimed them. As soon as every element of $X$ was offered by Waiter or viewed by him as claimed, he plays arbitrarily until the real end of the game. Since $S'$ is a winning strategy for Waiter in $\WC(X,{\mathcal F},q')$ and since, in this game, Waiter must offer at least $q' + 1$ elements of $X$ per round, it follows that after at most $\lfloor |X|/(q'+1) \rfloor$ rounds of the game $\WC(X,{\mathcal F},q)$, Client will fully claim some $F \in {\mathcal F}$.        
\end{proof}

The rest of this paper is organized as follows: in Section~\ref{avoid} we prove Proposition~\ref{conn:sparse} and Theorems~\ref{th::largeComponent} and~\ref{th::connectivity}. In Section~\ref{sec::cycles} we prove Theorem~\ref{th::pancyclic} and Proposition~\ref{prop::smallerThreshold}. Finally, in Section~\ref{sec::openprob} we present some open problems and conjectures.




\section {The largest component} \label{avoid}

In this section we look at the size of a largest component Waiter can force in Client's graph. We start this study with a proof of Proposition~\ref{conn:sparse}. 

\begin{proof}[Proof of Proposition~\ref{conn:sparse}]
The main idea behind Waiter's strategy is very simple. First, he forces a large matching in Client's graph. Then, he splits the set of endpoints of the edges in this matching into two roughly equal parts and forces Client to build a large matching on the vertices of one of these parts, thus forcing many pairwise vertex disjoint paths of length $3$ in Client's graph. Similarly, if for some positive integer $t$, Client's graph contains many pairwise vertex disjoint paths of length $2^t-1$, Waiter will split the set of endpoints of these paths into two roughly equal parts and force Client to build a large matching on the vertices of one of these parts, thus forcing many pairwise vertex disjoint paths of length $2^{t+1}-1$ in Client's graph. Simple calculations (which will be detailed below) show that, by repeating this process enough times, Waiter can force a path of length $\Theta(\ln n)$ in Client's graph.       

Let us now describe Waiter's strategy in greater detail. Recall that $q = c n$ for some constant $c > 0$. In light of Proposition~\ref{fast}, we can assume that $c \geq 1$. Let  
$$
t^* = \max \left\{t \in {\mathbb Z} \colon \left(\frac{n}{10q} \right)^{2^t} \cdot 10q \geq q^{2/3} \right\}.
$$ 
A straightforward calculation shows that $2^{t^*} = \Theta(\ln n)$. We will prove by induction on $t$ that, for every integer $0 \leq t \leq t^*$, Waiter can make sure that, at some point during the game, Client's graph will contain pairwise vertex disjoint paths $P_1, \ldots, P_{m_t}$ such that:
\begin{description}
\item [(a)] $m_t \geq \left(\frac{n}{10q} \right)^{2^t} \cdot 10q$; 
\item [(b)] $v(P_i) = 2^t$ for every $1 \leq i \leq m_t$;
\item [(c)] if $1 \leq i < j \leq m_t$, $x$ is an endpoint of $P_i$ and $y$ is an endpoint of $P_j$, then $xy$ is free.
\end{description}
 
Properties (a), (b) and (c) clearly hold for $t=0$ at the beginning of the game. Assuming that at some point during the game, Client's graph contains pairwise vertex disjoint paths $P_1, \ldots, P_{m_t}$ which satisfy properties (a), (b) and (c) for some integer $0 \leq t < t^*$, Waiter will force such paths for $t+1$ as follows. For every $1 \leq i \leq m_t$, let $u_i$ and $v_i$ denote the endpoints of $P_i$. Let $X_t = \{u_1, \ldots, u_{\lfloor m_t/2 \rfloor}\}$ and let $Y_t = \{u_{\lfloor m_t/2 \rfloor + 1}, \ldots, u_{m_t}\}$. Offering only edges of $E(X_t, Y_t)$, Waiter forces Client to build a large matching as follows. Assume that, for some non-negative integer $i$, Client's graph already contains a matching $M_i$ of size $i$ between $X_t$ and $Y_t$. Straightforward calculations show that
\begin{equation}\label{manyedges}
(\lfloor m_t/2 \rfloor - i)(\lceil m_t/2 \rceil - i) - i(q+1) \geq q+1
\end{equation}
holds, provided that
\begin{equation}\label{bigmatch}
i \leq \frac{\lfloor m_t/2 \rfloor^2 - (q+1)}{q+1+m_t} \,.
\end{equation}

By (\ref{manyedges}) and Property (c), we infer that Waiter can offer $q+1$ free edges of $E(X_t, Y_t)$ which are not incident with any vertex of $M_i$. It thus follows by (\ref{bigmatch}) and Property (a) that Waiter can force a matching of size $r$ between $X_t$ and $Y_t$, where 
\begin{equation} \label{eq::largeMatching}
r \geq \frac{\lfloor m_t/2 \rfloor^2 - (q+1)}{q+1+m_t} > \frac{m_t^2/4- m_t-q}{q+1+m_t} = \left(1+o(1)\right) \frac{m_t^2}{4(q+m_t)} \,,
\end{equation}
where in the last equality we used the fact that $m_t \geq q^{2/3}$ holds by the definition of $t^*$. 

Combining~\eqref{eq::largeMatching} and the simple fact that $m_t \leq n \leq q$, we have 
$$ 
r \geq \left(1+o(1)\right) \frac{m_t^2}{8q} \geq \left(1+o(1)\right) \frac{\left(\frac{n}{10q} \right)^{2^{t+1}} \cdot 100q^2}{8q} > \left(\frac{n}{10q} \right)^{2^{t+1}} \cdot 10 q \,,
$$
where in the second inequality we used property (a) for $t$, i.e., the assumption $m_t \geq \left(\frac{n}{10q} \right)^{2^t} \cdot 10q$.

We conclude that Waiter forces at least $\left(\frac{n}{10q} \right)^{2^{t+1}} \cdot 10 q$ pairwise vertex disjoint paths in Client's graph on $2 m_t = 2^{t+1}$ vertices each. This shows that properties (a) and (b) are satisfied for $t+1$. It is evident from Waiter's strategy that the resulting paths satisfy property (c) as well. This concludes our inductive argument. 

Finally, since, as noted above, $2^{t^*} = \Theta(\ln n)$, it follows by the definition of $t^*$ and by properties (a) and (b) for $t^*$, that Waiter can indeed force Client to build a path of length $\Theta(\ln n)$.
\end{proof}  

Our next goal is to prove Theorem~\ref{th::largeComponent}. We consider the upper bound first. Before proving this bound, we will state and prove two simple lemmata.

\begin{lemma} \label{cl::fewTrees}
Let $t_{\ell}(k)$ denote the number of labeled spanning trees of $K_k$ with at most $\ell$ leaves. If $k > 2\ell$, then $t_{\ell}(k) < \frac{(ek)^{2\ell} k!}{(2\ell)!}$.
\end{lemma}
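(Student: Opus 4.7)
My plan is to use the Prüfer bijection between labeled trees on $[k]$ and sequences in $[k]^{k-2}$, exploiting the standard fact that a vertex is a leaf of the tree if and only if it is missing from its Prüfer sequence. So counting trees with at most $\ell$ leaves amounts to counting Prüfer sequences whose support avoids at most $\ell$ of the $k$ possible values.

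I would stratify these sequences by the exact number $s$ of missing values, where $2 \le s \le \ell$, and within each stratum encode a sequence by the set $F \subseteq [k-2]$ of its ``first-appearance'' positions (where a value appears for the first time) together with the values at the remaining ``repeat'' positions. Since $|F| = k-s$ and $1 \in F$, there are at most $\binom{k-3}{s-2}$ choices for $F$. The values at the positions of $F$ form an ordered injection into $[k]$, contributing $k!/s!$ choices, and each of the $s-2$ repeat positions may carry any previously seen value, giving at most $(k-s)^{s-2} \le k^{s-2}$ choices in total. Combined with the elementary bound $\binom{k-3}{s-2} \le k^{s-2}/(s-2)!$, this yields an upper bound of roughly $k! \, k^{2s-4}/\bigl(s!\,(s-2)!\bigr)$ on the number of sequences with exactly $s$ missing values.

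Summing over $s$ and checking that, under the hypothesis $k > 2\ell$, these terms are monotonically increasing in $s$, the sum is at most $\ell$ times the $s = \ell$ term, giving a bound of the form $\ell \cdot k! \cdot k^{2\ell-4}/\bigl(\ell!\,(\ell-2)!\bigr)$. Comparing with the target $(ek)^{2\ell} k!/(2\ell)!$ then reduces to an elementary numerical inequality, which follows at once from $\binom{2\ell}{\ell} \le 4^\ell$, $4/e^2 < 1$, and $k > 2\ell$. I do not foresee any substantial obstacle here; the only part requiring care is the encoding of a Prüfer sequence via its first-appearance skeleton, which cleanly decouples the ``structural'' and ``repetitive'' contributions and makes the resulting sum transparent.
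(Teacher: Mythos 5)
Your proof is correct. Both you and the paper count via the Pr\"ufer bijection, but the decompositions are genuinely different. The paper observes that a tree with at most $\ell$ leaves has fewer than $\ell$ vertices of degree at least $3$, hence at least $k-2\ell$ labels of multiplicity exactly one in the code; it then chooses these labels, places them into distinct positions of the sequence, and fills the remaining $2\ell-2$ slots arbitrarily from the leftover $2\ell$ labels, arriving at the bound in a single product $\binom{k}{k-2\ell}\,(k-2)(k-3)\cdots(2\ell-1)\,(2\ell)^{2\ell-2}$ with no summation. You instead key on the complementary structure --- the $s\le \ell$ missing labels, i.e.\ the leaves --- and encode each sequence by its first-appearance skeleton, which forces you to stratify by $s$, verify monotonicity of the terms, and finish with a numerical comparison. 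Your route is a bit longer but yields the sharper per-stratum bound $k!\,k^{2s-4}/\bigl(s!\,(s-2)!\bigr)$ for trees with exactly $s$ leaves; the paper's is shorter and lands directly in the form used downstream. All the steps you sketch check out: the ratio of consecutive terms is $k^2/\bigl((s+1)(s-1)\bigr)>4$ under $k>2\ell$, and the final comparison reduces to $4^{\ell}\ell^{3}<e^{2\ell}k^{4}$, which indeed follows from $\binom{2\ell}{\ell}\le 4^{\ell}$, $4<e^{2}$ and $\ell<k/2$. (The degenerate cases $\ell\le 1$, where your sum over $s$ is empty, are vacuous, since every tree on at least two vertices has at least two leaves and hence $t_{\ell}(k)=0$ there.)
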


\begin{proof}
Let $T$ be a spanning tree of $K_k$ with at most $\ell$ leaves. It is easy to see that the number of vertices of $T$ of degree at least $3$ is less than $\ell$. Hence, at least $k - 2\ell$ vertices of $T$ are of degree $2$. The label of each such vertex appears exactly once in the Pr\"ufer code of $T$. The claim now follows since the number of appropriate sequences is at most 
$$
\binom{k}{k - 2\ell} \cdot (k-2)(k-3) \cdot \ldots \cdot (2\ell-1) \cdot (2\ell)^{2\ell-2} < \left(\frac{e k}{2 \ell} \right)^{2 \ell} \cdot \frac{(k-2)!}{(2\ell - 2)!} \cdot (2\ell)^{2\ell} < \frac{(ek)^{2\ell} k!}{(2\ell)!} \,.
$$ 
\end{proof}

\begin{lemma} \label{cl::manyPairs}
Let $T = (V,E)$ be a tree on $m\geq 3$ vertices. Then the number of ordered $r$-tuples $(P_1, \ldots, P_r)$ of (not necessarily distinct) non-trivial paths of $T$ whose union forms a subtree of $T$ is at least $(m/4)^{2r}$.  
\end{lemma}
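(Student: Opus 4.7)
The plan is to prove the lemma by pinning down a single vertex $v \in V(T)$ that is contained in many non-trivial paths of $T$, and then forming tuples by choosing each path independently from the paths through $v$. Since every $P_i$ then contains $v$, the union $P_1 \cup \cdots \cup P_r$ is automatically connected, and hence a subtree of $T$ (as any connected subgraph of a tree is itself a tree). So if I can show that the number $p(v)$ of non-trivial paths of $T$ passing through some $v \in V(T)$ is at least $m^2/16$, the bound $p(v)^r \ge (m^2/16)^r = (m/4)^{2r}$ follows at once.

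For the choice of $v$, I would take a \emph{centroid} of $T$: a vertex with the property that every component $T_1,\ldots,T_k$ of $T-v$, of sizes $m_1,\ldots,m_k$, satisfies $m_i \le m/2$. Such a vertex exists in every tree. A non-trivial path through $v$ either has $v$ as an endpoint (contributing $m-1$ such paths) or has $v$ as an internal vertex, in which case its two endpoints lie in distinct components of $T-v$, giving $\sum_{i<j} m_i m_j$ further paths. Hence
$$
p(v) \;=\; (m-1) + \sum_{i<j} m_i m_j \;=\; (m-1) + \tfrac{1}{2}\Bigl[(m-1)^2 - \textstyle\sum_i m_i^2\Bigr].
$$
The centroid condition yields $\sum_i m_i^2 \le (m/2)\sum_i m_i = (m/2)(m-1)$, so
$$
p(v) \;\ge\; (m-1) + \frac{(m-1)(m-2)}{4} \;=\; \frac{(m-1)(m+2)}{4} \;\ge\; \frac{m^2}{16},
$$
where the last inequality is the elementary estimate $3m^2 + 4m - 8 \ge 0$, valid for all $m \ge 2$. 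Combined with the first paragraph, this completes the argument.

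I do not anticipate any serious obstacle in carrying out this plan. The only points requiring modest care are invoking the existence of a centroid (a classical fact about trees), verifying the inequality $\sum_i m_i^2 \le (m/2)(m-1)$ from the centroid property (immediate since $\sum_i m_i^2 \le (\max_i m_i) \sum_i m_i$), and confirming the final numerical inequality in the borderline case $m=3$, which is checked by direct substitution. Everything else is routine counting, and the argument proceeds cleanly for every $m \ge 3$.
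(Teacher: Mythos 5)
Your proof is correct and takes essentially the same approach as the paper: both select a centroid $v$ (a vertex all of whose deleted components have at most $m/2$ vertices) and exploit the fact that paths sharing $v$ have a connected union, hence a subtree. The only difference lies in how the bound of $(m/4)^2$ on the number of admissible paths through $v$ is obtained — the paper groups the components of $T-v$ into two parts $A,B$ of size at least $m/4$ each and counts paths with one endpoint in each, while you count all non-trivial paths through $v$ exactly and apply the centroid inequality to $\sum_i m_i^2$; both computations are valid.
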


\begin{proof}
For every vertex $w \in V$, let $r_w = \max \{v(C) : C \textrm{ is a component of } T \setminus w\}$. Let $v$ be a vertex for which $r_v = \min \{r_w : w \in V\}$. Suppose  that $C_1$ is a component of $T \setminus v$ for which $v(C_1) > m/2$. Let $u$ denote the unique neighbor of $v$ in $C_1$. It is easy to see that $r_u < r_v$ contrary to our choice of $v$. This contradiction shows that $r_v \leq m/2$. 

We will show that there exists a partition $A \cup B$ of $V \setminus \{v\}$ such that $|A|,|B| \geq m/4$ and $C \subseteq A$ or $C \subseteq B$ for every component $C$ of $T \setminus v$. Indeed, let $C_1, \ldots, C_t$ denote the components of $T \setminus v$, where $|C_1| \leq \ldots \leq |C_t| = r_v$. Let $1 \leq j \leq t$ denote the smallest integer for which $\sum_{i=1}^j |C_i| \geq m/4$. Set $A = \bigcup_{i=1}^j C_i$ and $B = (V \setminus \{v\}) \setminus A$. It is immediate from the definition of $A$ and $B$ that $|A| \geq m/4$ and that $C \subseteq A$ or $C \subseteq B$ for every component $C$ of $T \setminus v$. Moreover, $|B| = m - 1 - |A| \geq \max \{r_v, m - 1 - (\lfloor m/4 \rfloor + r_v)\} \geq m/4$.     

Let $A \cup B$ be a partition of $V \setminus \{v\}$ such that $|A|,|B| \geq m/4$ and $C \subseteq A$ or $C \subseteq B$ for every component $C$ of $T \setminus v$. For every $(a_1, \ldots, a_r) \in A^r$ and $(b_1, \ldots, b_r) \in B^r$ there is an $r$-tuple $(P_1, \ldots, P_r)$ of paths of $T$ such that $a_i$ and $b_i$ are the endpoints of $P_i$ for every $1 \leq i \leq r$. Clearly, the number of such $r$-tuples is $|A|^r \cdot |B|^r \geq (m/4)^{2r}$. Moreover, $\bigcup_{i=1}^r P_i$ is a subtree of $T$ since $v \in \bigcap_{i=1}^r V(P_i)$ for each such $r$-tuple. 
\end{proof}

We can now prove the upper bound on $\comp(n,q)$ in Theorem~\ref{th::largeComponent}.  

\begin{proof}[Proof of Theorem~\ref{th::largeComponent}(\ref{comp_client})]
In order to prove this theorem, we will describe a winning strategy for Client. It readily follows from Lemma~\ref{cl::manyPairs} that, if there is a large component in Client's graph, then, for an appropriately chosen parameter $r$, this component contains many $r$-tuples of paths whose union is a tree. Hence, in order to ensure his graph does not contain a large component, Client will guarantee his graph contains relatively few such $r$-tuples. His strategy for doing so will be based on Theorem~\ref{BESPC}.   

Let $r = r(n)$ be a positive integer to be determined later and let ${\mathcal F}_r$ be the family of all ordered $r$-tuples $(P_1, \ldots, P_r)$ of (not necessarily distinct) non-trivial paths in $K_n$ for which $\bigcup_{i=1}^r P_i$ is a tree. We would like to bound $\Phi(\mathcal{F}_r) = \sum_{A \in {\mathcal F}_r} (q+1)^{-|A|}$ from above. Note that there is some abuse of notation here. Formally, ${\mathcal F}_r$ is actually a multi-family of trees, where every tree $T$ appears several times in ${\mathcal F}_r$, once for every ordered $r$-tuple $(P_1, \ldots, P_r)$ for which $\bigcup_{i=1}^r P_i = T$. We use this notation as we feel it will help the reader remember we are dealing with a multi-family. Let ${\mathcal F}_r^1$ denote the family of all ordered $r$-tuples $(P_1, \ldots, P_r) \in {\mathcal F}_r$ for which $v(\bigcup_{i=1}^r P_i) \leq 6r$ and let ${\mathcal F}_r^2$ denote the family of all ordered $r$-tuples $(P_1, \ldots, P_r) \in {\mathcal F}_r$ for which $v(\bigcup_{i=1}^r P_i) \geq 6r+1$. For $i \in \{1,2\}$, let $\Phi_i = \Phi(\mathcal{F}_r^i)$; then $\Phi(\mathcal{F}_r) = \Phi_1 + \Phi_2$. We will first bound each of these terms separately.

For every $(P_1, \ldots, P_r) \in {\mathcal F}_r^1$, the tree $\bigcup_{i=1}^r P_i$ and the ordered $(2r)$-tuple of endpoints of the $P_i$'s determine $(P_1, \ldots, P_r)$ uniquely. Hence
\begin{equation} \label{eq::phi1}
\Phi_1 \leq \sum_{k = 2}^{6r} \binom{n}{k} k^{k-2} \cdot k^{2r} (q+1)^{-k+1} < n \sum_{k = 2}^{6r} e^k k^{2r} \left(\frac{n}{q} \right)^{k-1} < 6r n e^{6r} \cdot (6r)^{2r} < c_1^r n r^{2r+1} \,,
\end{equation}
where $c_1 > 0$ is some absolute constant.

In order to obtain an effective upper bound on $\Phi_2$ we will be more careful when estimating the number of $r$-tuples whose union is a given tree. If $(P_1, \ldots, P_r) \in {\mathcal F}_r^2$ and $T = \bigcup_{i=1}^r P_i$, then every leaf of $T$ must be an endpoint of some $P_i$. Let $\ell$ denote the number of leaves of $T$ and let $(a_1, \ldots, a_r, b_1, \ldots, b_r)$ be the vector of endpoints, where $a_i$ and $b_i$ are the endpoints of $P_i$ for every $1 \leq i \leq r$. There are $(2r)_{\ell}$ ways to determine the leftmost position of each of these $\ell$ leaves in this vector and $k^{2r-\ell}$ ways to fill in the remaining $2r - \ell$ positions. Hence
\begin{eqnarray} \label{eq::phi2}
\Phi_2 &\leq& \sum_{k = 6r+1}^n \binom{n}{k} \sum_{\ell = 2}^{2r} t_{\ell}(k) \cdot (2r)_\ell \cdot k^{2r-\ell} \cdot (q+1)^{-k+1} \nonumber\\ &\leq& \sum_{k = 6r+1}^n \binom{n}{k}\binom{2r}{r} \sum_{\ell = 2}^{2r} t_{\ell}(k) \cdot \ell! \cdot k^{2r-\ell} \cdot (q+1)^{-k+1} \nonumber\\ &\leq& q \sum_{k = 6r+1}^n \frac{n^k}{k! q^k} \cdot 2^{2r} \cdot k^{2r} \sum_{\ell = 2}^{2r} \frac{(ek)^{2\ell} k!\ell!}{(2\ell)! k^{\ell}} < q \sum_{k = 6r+1}^n \frac{n^k}{q^k} \cdot 4^r e^{4r} k^{2r} \sum_{\ell = 2}^{2r} \frac{k^{\ell}\ell!}{(2\ell)!} \nonumber \\ &<& q \sum_{k = 6r+1}^n \frac{n^k}{q^k} \cdot 4^r e^{4r} k^{2r} \sum_{\ell = 2}^{2r} \left(\frac{k}{\ell} \right)^{\ell}
< q \sum_{k = 6r+1}^n \frac{n^k}{q^k} \cdot 4^r e^{4r} k^{2r} \cdot 2r \cdot \left(\frac{k}{2r} \right)^{2r}  \nonumber \\
&<& (1 + \varepsilon) n \sum_{k = 6r+1}^n \frac{2r \cdot e^{4r} k^{4r}}{r^{2r}(1 + \varepsilon)^k} \,,
\end{eqnarray}
where we used the obvious fact that the number of trees with exactly $\ell$ leaves is not larger than the number of trees with at most $\ell$ leaves in the first inequality, Lemma~\ref{cl::fewTrees} in the third inequality and the fact that $(k/\ell)^{\ell}$ is increasing for $k > 6r$ and $\ell \leq 2r$.
         
A straightforward calculation shows that the function $f(x) = x^{4r} (1 + \varepsilon)^{-x}$ attains its maximum at $x = 4r/\ln(1+\varepsilon)$. Hence, using~\eqref{eq::phi2} and the fact that $\ln(1+x) \sim x$ when $x$ tends to $0$, we infer that
\begin{equation} \label{eq::phi2cont}
\Phi_2 \leq (1 + \varepsilon) n^2 \cdot \frac{2r (4r)^{4r}}{(\ln(1 + \varepsilon))^{4r} r^{2r}} < c_2^r n^2 r^{2r+1}/\varepsilon^{4r} \,,
\end{equation}
where $c_2 > 0$ is some absolute constant.

Combining~\eqref{eq::phi1} and~\eqref{eq::phi2cont}, we conclude that 
\begin{equation}
\Phi(\mathcal{F}_r) < c_3^r n^2 r^{2r+1}/\varepsilon^{4r} \,.
\end{equation}
where $c_3 > 0$ is some absolute constant.

It thus follows by Theorem~\ref{BESPC} that Client has a strategy to ensure that his graph will contain less than $c_3^r n^2 r^{2r+1}/\varepsilon^{4r}$ ordered $r$-tuples from ${\mathcal F}_r$. Suppose that $L$ is a component of $G_C$ of order $s$ and let $T$ be a spanning tree of $L$. It follows from Lemma~\ref{cl::manyPairs} that there are at least $(s/4)^{2r}$ ordered $r$-tuples of non-trivial paths of $T$ whose union is a subtree of $T$. Hence 
\begin{equation} \label{eq::finalBound}
(s/4)^{2r} < c_3^r n^2 r^{2r+1}/\varepsilon^{4r} \,.
\end{equation}

Substituting $r = \lfloor \ln n \rfloor$ in~\eqref{eq::finalBound}, we obtain $s \leq c \varepsilon^{-2} \ln n$, for some absolute constant $c > 0$.
\end{proof}

Part (ii) of Theorem~\ref{th::largeComponent} is an immediate consequence of the following theorem, which will also play a crucial role in the proof of Theorem~\ref{th::connectivity}. 

\begin{theorem} \label{BigCompSup}
For all positive integers $n$ and $q$, Waiter has a strategy to ensure that $\comp(n,q) \geq \min\{n, 2(n-q-1)\}$.  
\end{theorem}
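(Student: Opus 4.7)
The plan is to describe an explicit strategy of Waiter that grows a single connected subgraph in Client's graph, and then to analyse how long the growth can be sustained.

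\textbf{The strategy.} Fix a vertex $v_0 \in V$ and initialise $C = \{v_0\}$; we will maintain $C$ as (a subset of) the vertex set of a Client-component. At every round, Waiter picks a vertex $u \in C$ whose number of free edges to $V \setminus C$ is maximal, and offers Client $q+1$ free edges of the form $uw$ with $w \in V \setminus C$. If such a vertex $u$ does not exist, Waiter plays arbitrarily on the remaining free edges. Whenever Client picks an edge $uw$ with $w \in V \setminus C$, we add $w$ to $C$; so $|C|$ grows by exactly one in each successful round. It suffices to show that the strategy keeps succeeding until $|C| \geq \min\{n, 2(n-q-1)\}$.

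\textbf{Phase 1 (easy pass).} The first $n-q-1$ rounds can be analysed directly: if $u$ is the vertex most recently added to $C$, then $u$ has no Waiter-edges so far, so its free degree to $V \setminus C$ equals $|V \setminus C| - 1 \geq q+1$ as long as $|C| \leq n-q-1$. This already forces $|C| \geq n-q$, but we need to go further.

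\textbf{Phase 2 (second pass).} For the improvement from $n-q$ to $2(n-q-1)$ I would argue as follows. Write $s = |C|$ and let $\Phi = e_F(C, V\setminus C)$ denote the number of \emph{free} edges between $C$ and its complement. By pigeonhole, as long as $\Phi \geq s\cdot q + 1$, some vertex of $C$ has at least $q+1$ free edges to $V \setminus C$ and Waiter can extend. So the task is to lower bound $\Phi$.

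Every round consumes exactly $q+1$ free edges incident to the extender, but when the new vertex $w$ joins $C$ the $|V\setminus C|-1$ edges from $w$ to $V \setminus C$ switch from ``inner'' to ``cut'' position, and those that are still free contribute to $\Phi$. Because $w$ has never been the extender, essentially none of its edges outside $C$ have been touched, so this contribution is close to $|V \setminus C|-1$. A careful book-keeping of this exchange shows
\[
\Phi \geq s(n-s) - (s-1)(q+1) + (\text{Waiter-edges that have become internal to } C),
\]
and the last term can be made substantial by choosing, at every extension, the $q$ ``sacrificed'' vertices from a pool that is likely to end up in $C$ (for instance, the same fixed set of candidates at each round, or vertices already in the interior of the current tree). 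Substituting $s < 2(n-q-1)$ into the resulting inequality and using $s \leq n$ yields $\Phi > sq$, which finishes the inductive step.

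\textbf{Main obstacle.} The delicate point is making precise the lower bound on the number of Waiter-edges that eventually become internal to $C$; a careless counting only yields the weaker bound $|C| \geq n-q$, which is already the output of Phase 1. The crucial additional input is that Waiter has full control over which $q$ vertices he ``sacrifices'' in every round, and by consistently offering the same candidate pool he can guarantee that each newly added vertex $w$ carries with it enough previously-created Waiter-edges that flip to internal status, so that the net loss of $\Phi$ per round is at most $1$ rather than $q+1$. This is what doubles the bound from $n-q$ to $2(n-q-1)$ and, when $q\leq \lfloor n/2\rfloor-1$, exhausts all of $V$ and yields connectivity.
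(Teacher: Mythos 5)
Your Phase~1 is fine and matches the spirit of the paper's Stage~I (grow a Client tree one vertex per round while the outside is still large). But Phase~2 has a fatal structural flaw before one even gets to the missing bookkeeping: you propose to offer $q+1$ free edges all incident to a single vertex $u\in C$ and going to $V\setminus C$. Once $|C|=s>n-q-1$ we have $|V\setminus C|=n-s<q+1$, so \emph{no} vertex of $C$ has $q+1$ edges (free or not) to $V\setminus C$, and your pigeonhole criterion ``$\Phi\ge sq+1$'' can never be met ($\Phi\le s(n-s)<s(q+1)$ is not the issue; the issue is that each vertex of $C$ individually has fewer than $q+1$ cut edges). So Phase~2 cannot execute even its first round, exactly where Phase~1 stops. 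The fix is to reverse the orientation of the offered star: the $q+1$ edges must be incident to a common vertex \emph{outside} $C$ (which can have up to $|C|=s$ free edges into the component), possibly supplemented by a few edges at other outside vertices. This is what the paper does in its Stage~II.

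The second gap is the one you flag yourself: the claim that the net loss of free cut edges per round is $1$ rather than $q+1$ is not established, and the proposed mechanism (``sacrifice vertices from a pool likely to end up in $C$'') has no force against an adversarial Client, who decides which offered edge is kept and hence which vertices actually join $C$. The paper's actual mechanism is different and is the real content of the proof: during Stage~I, Waiter rations his claimed edges so that the outside vertices can be ordered $u_1,\dots,u_{q+1}$ with $d_{G_W}(u_i,V(T))\le i-1$ (and $\le n-q-2$ for the tail), while all edges among outside vertices stay free. Then, when absorbing $u_i$ in Stage~II, Waiter offers the $\ge n-q-i$ free edges from $u_i$ into the tree, the $i-1$ free edges from $u_i$ to previously absorbed vertices, and $2(q+1)-n$ spare edges from the high-index $u_j$'s; these sum to exactly $q+1$. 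Without an invariant of this kind carried through the first phase, the second phase cannot be salvaged, so the proposal does not constitute a proof.
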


\begin{proof}
Since the assertion of the theorem is trivially true for $q \geq n-2$, we can assume that $q \leq n-3$ and that $n \geq 4$. Moreover, since $\min \{n, 2(n-q-1)\} = n$ whenever $q \leq (n-1)/2 - 1$ and since $\comp(n,q)$ is a non-increasing function of $q$, we may assume that $q \geq (n-1)/2 - 1$. Therefore, in the remainder of this proof we assume that $n \geq 4$ and $(n-1)/2 - 1 \leq q \leq n-3$.

We present a strategy for Waiter and then prove it allows him to ensure that Client's graph will contain a component on $n$ vertices if $q = (n-1)/2-1$ (in particular, we assume that $n$ is odd in this case), or on at least $2(n-q-1)$ vertices if $q \geq n/2 - 1$. The proposed strategy consists of two stages. Roughly speaking, in the first stage Waiter forces Client's graph to be a tree with $n-q-2$ edges, while making sure that there are still many free edges between this tree and the remaining vertices. In the second stage Waiter forces Client to extend his tree by absorbing $\min\{n-q-1, q+1\}$ additional vertices, one at a time.   

In light of Proposition~\ref{fast}, we will sometimes assume that Waiter offers strictly more than $q+1$ edges in a round. At any point during the game, if Waiter is unable to follow the strategy presented below, then he forfeits the game. 

\bigskip

\noindent \textbf{Stage I:} Waiter forces Client to build a tree $T$ which satisfies the following three properties: 
\begin{description}
\item [(a)] $v(T) = n-q-1$. 
\item [(b)] $xy$ is free for every $x, y \in V(K_n) \setminus V(T)$. 
\item [(c)] There exists an ordering $u_1, \ldots, u_{q+1}$ of the vertices of $V(K_n) \setminus V(T)$ such that $d_{G_W}(u_i, V(T)) \leq  i-1$ holds for every $1 \leq i \leq \min\{n-q-1, q+1\}$ and $d_{G_W}(u_i, V(T)) \leq n-q-2$ for every $n-q \leq i \leq q+1$.  
\end{description}
This stage lasts exactly $n-q-2$ rounds and as soon as it is over, Waiter proceeds to Stage II.

\medskip

\noindent \textbf{Stage II:} For every $1 \leq i \leq \min\{n-q-1, q+1\}$, in the $i$th round of this stage Waiter offers Client all the free edges of $E(u_i, V(T) \cup \{u_1, \ldots, u_{i-1}\})$. If $q \geq n/2$, then, additionally, for every $n-q \leq j \leq q+1$, Waiter offers one arbitrary free edge of $E(u_j, V(T) \cup \{u_1, \ldots, u_{i-1}\})$.

\bigskip

It remains to prove that Waiter can indeed follow the proposed strategy without forfeiting the game and that, by doing so, he achieves his goal. We consider each stage separately.

\medskip

\noindent \textbf{Stage I:} We will prove by induction on $i$ the following more general claim: Waiter has a strategy to ensure that the following three properties will hold immediately after the $i$th round for every $1 \leq i \leq n-q-2$:
\begin{description}
\item [(a$'$)] Client's graph is a tree $T_i$ with $i$ edges. 
\item [(b$'$)] $xy$ is free for every $x, y \in V(K_n) \setminus V(T_i)$. 
\item [(c$'$)] There is an ordering $u_1, \ldots, u_{n-i-1}$ of the vertices of $V(K_n) \setminus V(T_i)$ such that 
\begin{enumerate}
\item $d_{G_W}(u_j, V(T_i)) = j-1$ for every $1 \leq j \leq \min\{i+1, q+1\}$.   
\item $d_{G_W}(u_j, V(T_i)) = 0$ for every $i+2 \leq j \leq n-q-1$.
\item $d_{G_W}(u_j, V(T_i)) = i$ for every $n-q \leq j \leq n-i-1$.
\end{enumerate} 
\end{description}
Note that for $i = n-q-2$, Properties (a$'$), (b$'$) and (c$'$) entail Properties (a), (b) and (c) (with $T = T_{n-q-2}$).

\medskip

In the first round Waiter offers edges $x y_1, \ldots, x y_{q+1}$ for arbitrary vertices $x, y_1, \ldots, y_{q+1} \in V(K_n)$. Assume without loss of generality that Client selects $x y_1$. Clearly Properties (a$'$) and (b$'$) are satisfied. Let $z_1, \ldots, z_{n-q-2}$ be an arbitrary ordering of the vertices of $V(K_n) \setminus \{x, y_1, \ldots, y_{q+1}\}$. Taking $(u_1, \ldots, u_{n-2}) = (z_1, y_2, z_2, \ldots, z_{n-q-2}, y_3, \ldots, y_{q+1})$ shows that Property (c$'$) is satisfied as well. This proves our claim for $i=1$. 

Assume then that the claim holds for some $1 \leq i \leq n-q-3$; we will show that it holds for $i+1$ as well. In the $(i+1)$st round, Waiter offers all the free edges of $E(\{u_{n-q}, u_{n-q+1}, \ldots, u_{n-i-1}\} \cup \{u_{i+2}\}, V(T_i)\}$. Since $i \leq n-q-3$, it follows that $i+2 \leq n-q-1$. Hence, by the induction hypothesis, Property (c$'$) for $i$ implies that the total number of edges offered is at least $((n-i-1) - (n-q) + 1) \cdot 1 + 1 \cdot (i + 1) = q + 1$. Client selects one of these edges and it readily follows that Properties (a$'$) and (b$'$) are satisfied immediately after the $(i+1)$st round. By reordering the vertices of $V(K_n) \setminus V(T_{i+1})$ if needed, one can verify that Property (c$'$) is satisfied as well.

\bigskip

\noindent \textbf{Stage II:} It suffices to show that in every round of this stage, Waiter offers at least $q+1$ free edges. Let $t = 0$ if $q = (n-1)/2 - 1$ and $t = 1$ if $q \geq n/2 - 1$. Fix an arbitrary $1 \leq i \leq n - q - 2 + t$ and assume that Waiter did not yet forfeit the game and is about to play the $i$th round of Stage II (note that $q+1 = n-q-2$ for $q = (n-1)/2 - 1$). It follows by the proposed strategy for this stage, that no edge which is incident with $u_i$ was offered by Waiter in the $j$th round for any $1 \leq j < i$. Therefore, we infer by Property (b) that
\begin{equation} \label{eq::freeEdgesb}
|\{u_i u_j \colon 1 \leq j < i \textrm{ and } \; u_i u_j \textrm{ is free}\}| = i-1 
\end{equation}

and by Property (c) that
\begin{equation} \label{eq::freeEdgesc}
|\{u_i w \colon w \in V(T) \textrm{ and } u_i w \textrm{ is free}\}| \geq n - q - 1 - (i - 1) = n - q - i \,.
\end{equation}

Furthermore, because of Property (c), for every $n-q \leq j \leq q+1$ there was at least one free edge in $E(u_j,V(T))$ at the end of Stage I. Similarly, because of Property (b), for every $n-q \leq j \leq q+1$ all edges of $E(u_j, \{u_1, \ldots, u_{i-1}\})$ were free at the end of Stage I. Note that, during the first $i-1$ rounds of Stage II, Waiter has claimed at most $i-1$ edges which are incident to $u_j$. Hence, for every $1 \leq i \leq n - q - 2 + t$ and every $n-q \leq j \leq q+1$, immediately before the $i$th round of Stage II, there is still a free edge in $E(u_j, V(T) \cup \{u_1, \ldots, u_{i-1}\})$. It follows that 
\begin{eqnarray} \label{eq::freeEdgesLargeq}
|\{n-q \leq j \leq q+1 \colon \exists w \in V(T) \cup \{u_1, \ldots, u_{i-1}\} \textrm{ such that } u_j w \textrm{ is free}\}| \geq 2(q+1) - n \,.
\end{eqnarray}

Combining~\eqref{eq::freeEdgesb}, \eqref{eq::freeEdgesc} and~\eqref{eq::freeEdgesLargeq} we conclude that the number of free edges Waiter offers in the $i$th round of Stage II is at least $(i-1) + (n - q - i) + (2(q+1) - n) = q + 1$. 

\medskip 

Since we have shown that Waiter can play according to the proposed strategy without forfeiting the game, it readily follows from the description of the strategy that for every $1 \leq i \leq (n-q-2) + (n-q-2+t)$, immediately after the $i$th round, Client's graph is a tree with $i$ edges. In particular, Client is forced to build a component of order $2(n-q-2) + t + 1 = \min \{n, 2(n - q - 1)\}$ in exactly $2(n-q-2) + t$ rounds.
\end{proof}

We end this section with a (by now, very easy) proof of Theorem~\ref{th::connectivity}.

\begin{proof}[Proof of Theorem~\ref{th::connectivity}] If $q \leq \lfloor n/2 \rfloor - 1$, then it follows by Theorem~\ref{BigCompSup} that Waiter can force Client to build a graph admitting a component on $n$ vertices, i.e. a connected graph. On the other hand, if $q > \lfloor n/2 \rfloor - 1$, then by the end of the game, Client's graph will contain strictly less than $n-1$ edges and will therefore be disconnected.
\end{proof}

\section{Circumference, Hamiltonicity and Pancyclicity} \label{sec::cycles}

The main goal of this section is to prove Theorem~\ref{th::pancyclic}. Starting with Part (i), we will in fact prove several results on the circumference of Client's graph. Consider a $(q:1)$ Waiter-Client game on $E(K_n)$ in which Waiter aims to maximize the circumference of Client's graph and Client tries to minimize it. Assuming both players follow their optimal strategies, denote the length of a longest cycle in Client's graph by $\cyc(n,q)$; if Client's graph is a forest, then we put $\cyc(n,q) = 0$. Our results will demonstrate that $\cyc(n,q)$ exhibits a similar behavior to that of the circumference of the random graph $G\left(n, \left\lfloor \binom{n}{2}/(q+1) \right\rfloor \right)$.  

\begin{theorem} \label{thmcyc}
The following hold for every positive integers $n$ and $q = q(n)$.
\begin{enumerate}[{\bf (i)}]
\item If $q \geq 1.1 n$, then $\cyc(n,q) = 0$.
\item If $q = n + \eta$, where $1 \leq \eta = \eta(n) \leq 0.1 n$, then $\cyc(n,q) \leq 10 n/\eta \cdot \ln \ln(n/\eta)$.
\item If $q = n - \eta$, where $10 n^{3/4} \leq \eta = \eta(n) \leq n-1$, then $\cyc(n,q) \geq \eta/6$.
\end{enumerate}
\end{theorem}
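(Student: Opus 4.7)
For Parts (i) and (ii) my plan is a direct application of Theorem~\ref{BESPC} with the family $\mathcal{F}_{\ge k_0}$ of (edge sets of) cycles in $K_n$ of length at least $k_0$. Since the number of cycles of length $k$ in $K_n$ is $n!/((n-k)!\cdot 2k)$, and $n!/(n-k)! \le n^k$, we get
\[
\Phi(\mathcal{F}_{\ge k_0}) \;\le\; \sum_{k=k_0}^{\infty}\frac{1}{2k}\Bigl(\frac{n}{q+1}\Bigr)^{k}.
\]
For (i) I take $k_0=3$: with $n/(q+1)\le 1/1.1$, evaluating $\sum_{k\ge 3}x^k/k = -\ln(1-x)-x-x^2/2$ at $x=1/1.1$ gives $\Phi<1$ (explicitly $\Phi \le 0.54$), and Theorem~\ref{BESPC} hands Client a strategy avoiding every cycle, so $\cyc(n,q)=0$. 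For (ii) I take $k_0 = L := \lceil 10(n/\eta)\ln\ln(n/\eta)\rceil$ and $\beta := \eta/(n+\eta) \ge 0.9\eta/n$. Using $n/(q+1) \le 1-\beta$, a geometric-tail estimate yields
\[
\Phi(\mathcal{F}_{\ge L}) \;\le\; \frac{(1-\beta)^L}{2L\beta} \;\le\; \frac{e^{-\beta L}}{2L\beta}.
\]
The choice of $L$ gives $\beta L \ge 9\ln\ln(n/\eta) \ge 9\ln\ln 10 > 7$ (from $\eta\le 0.1n$), which is more than enough to force $\Phi<1$; a second invocation of Theorem~\ref{BESPC} then produces a Client strategy in which no cycle has length at least $L$.

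For Part~(iii) I want to exhibit a Waiter strategy that forces a cycle of length at least $\eta/6$. My plan is a two-stage argument. In Stage~I, Waiter uses a variant of Stage~I of Theorem~\ref{BigCompSup} to force Client's graph to contain a long path $P=v_1v_2\cdots v_\eta$: at round $i$ he offers $q+1$ edges from the current endpoint $v_i$ to distinct external vertices, which is feasible because $n-i \ge q+1$ whenever $i\le \eta-1$; whichever edge Client picks becomes the next edge of $P$. In Stage~II, Waiter offers $q+1$ \emph{long chords} of $P$, i.e.~edges $v_iv_j$ with $|i-j|\ge \eta/6-1$, in a single round; Client's selection of such a chord closes, together with the path segment between its endpoints, a cycle of length at least $\eta/6$. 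The number of long-chord candidates on a path of $\eta$ vertices is of order $25\eta^2/72$, and the hypothesis $\eta \ge 10n^{3/4}$ gives $\eta^2 \ge 100n^{3/2} \gg n \ge q+1$, so counting-wise there are plentifully many candidates.

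The hard part of Part~(iii) will be guaranteeing that enough long-chord candidates are still \emph{free} (i.e.~not yet claimed by Waiter) when Stage~II begins: during Stage~I Waiter acquires $q$ edges incident to each path vertex, so up to $q(\eta-1)$ chords of $P$ could in principle have become Waiter-edges. I expect the resolution to be a careful scheme for choosing which external vertices Waiter offers in each round of Stage~I, engineered so that burned chords are biased toward \emph{short} ones while long chords remain overwhelmingly free; the precise threshold $\eta \ge 10n^{3/4}$ should enter exactly where this bookkeeping becomes quantitatively viable. Should the bookkeeping prove intractable, an alternative route would be to apply Theorem~\ref{cpwin} with a carefully crafted family $\mathcal{F}$ of edge sets whose transversal forces a cycle of length at least $\eta/6$ in Client's graph; identifying such an $\mathcal{F}$ and verifying $\sum_{A\in\mathcal{F}}2^{-|A|/(2q-1)} < 1/2$ would then be the main technical obstacle.
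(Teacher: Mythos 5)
Your treatment of parts (i) and (ii) is correct and is essentially the paper's argument: apply Theorem~\ref{BESPC} to the family of cycles of length at least $k_0$, bound the potential by a logarithmic/geometric tail, and check it is below $1$ for $k_0=3$ (resp.\ $k_0\approx 10(n/\eta)\ln\ln(n/\eta)$). Your numerical verifications go through.

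Part (iii), however, has a genuine gap, and the difficulty you flag as ``the hard part'' is not a bookkeeping issue that can be engineered away within your scheme --- it is a counting obstruction. During Stage~I Waiter burns $q$ edges at the current endpoint in every round, so after building a path on $\Theta(\eta)$ vertices the burn budget incident to path vertices is $\Theta(q\eta)=\Theta(n\eta)$, whereas the total number of chords of the path (long or short) is only $\Theta(\eta^2)$. Since $\eta$ may be as small as $10n^{3/4}$, we have $\eta^2=o(n\eta)$, so there is ample budget for \emph{every} chord of the eventual path to end up in Waiter's graph; moreover Client, not Waiter, decides which offered vertex becomes the next path vertex, so Waiter cannot steer the burned edges toward non-path vertices or toward short chords. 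Hence Stage~II as you describe it may find no free long chord at all. The paper's strategy avoids this by never trying to close the cycle with a single chord. It first sets aside two small sets $V_1,V_2$ of size $\lceil n^{3/4}\rceil$ and builds the path $P=(u_1,\dots,u_{\lceil\eta/2\rceil})$ entirely inside $V_3=V(K_n)\setminus(V_1\cup V_2)$, so that \emph{all} edges between $V_1\cup V_2$ and $P$ remain free after Stage~I. It then forces about $\sqrt n$ Client edges from $V_1$ into the first $\lfloor\eta/6\rfloor$ vertices of $P$ and about $\sqrt n$ Client edges from $V_2$ into the last third of $P$ (here the hypothesis $\eta\ge 10n^{3/4}$ guarantees each such offer contains at least $q+1$ free edges, since $n^{1/4}\cdot\eta/6\ge q+1$), and finally forces one Client edge between the resulting sets $W_1\subseteq V_1$ and $W_2\subseteq V_2$, which is possible because $|W_1|,|W_2|\ge\lfloor\sqrt n\rfloor$ and all of $E(W_1,W_2)$ is still free. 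The cycle is closed by the length-three detour $u_a w_1 w_2 u_b$ rather than a chord, and it contains the middle third of $P$, giving length at least $\eta/6$. If you want to salvage your part (iii), you should adopt this reservation of vertex sets disjoint from the path; your fallback via Theorem~\ref{cpwin} is not developed enough to assess.
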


\begin{proof}
In order to prove parts (i) and (ii) of the theorem, we apply the potential method (i.e. Theorem~\ref{BESPC}) to show that Client can avoid cycles of given lengths. For an integer $m \geq 3$ let $\cF_m$ denote the family of edge-sets of all cycles of length at least $m$ in $K_n$. Then 
\begin{eqnarray*} 
\Phi(\cF_m) &=& \sum_{A \in \cF_m} (q+1)^{-|A|} = \sum_{k=m}^{n} \binom{n}{k} \frac{(k-1)!}{2} (q+1)^{-k} < \frac{1}{2} \sum_{k=m}^{\infty} \frac{1}{k} \left(\frac{n}{q}\right)^k \\ &<& \frac{1}{2} \left(\frac{n}{q} \right)^{m-1} \sum_{k=1}^{\infty} \frac{1}{k} \left(\frac{n}{q}\right)^k = \frac{1}{2} \left(\frac{n}{q} \right)^{m-1} \ln \left(\frac{q}{q-n} \right) \,,
\end{eqnarray*}
where the last equality follows from the Taylor expansion $- \ln (1-x) = \sum_{k=1}^{\infty} x^k/k$.

Now, one can easily verify that $\Phi(\cF_3) < 1$ if $q \geq 1.1 n$ and that $\Phi(\cF_m) < 1$ if $q = n + \eta$ for some $1 \leq \eta = \eta(n) \leq 0.1 n$ and $m = \lfloor 10 n/\eta \cdot \ln \ln(n/\eta) \rfloor$. Consequently, Parts (i) and (ii) of the theorem follow from Theorem~\ref{BESPC}.

\bigskip

In order to prove part (iii) of the theorem we will describe a strategy for Waiter. Roughly speaking, Waiter's strategy is as follows. First, he forces client to build a path $P$ of length $\eta/2$. Then, he forces Client to claim many edges with one endpoint in some set $W_1$ which is disjoint from $P$ and the other endpoint among the first $|P|/3$ vertices of $P$. Similarly, he forces Client to claim many edges between the last $|P|/3$ vertices of $P$ and some set $W_2$ which is disjoint from $P$. Finally, Waiter forces Client to claim an edge between $W_1$ and $W_2$ (which is clearly possible if $W_1$ and $W_2$ are large enough). This ensures that Client will build a cycle which  contains the middle $|P|/3$ vertices of $P$. 

Formally, Waiter's strategy consists of the following five simple stages. 

\bigskip

\noindent \textbf{Stage I:} Let $V_1 \cup V_2 \cup V_3$ be a partition of $V(K_n)$ such that $|V_1| = |V_2| = \lceil n^{3/4} \rceil$. Offering only edges of $K_n[V_3]$, Waiter forces Client to build a path $P = (u_1, \ldots, u_{\lceil \eta/2 \rceil})$. 

\medskip

\noindent \textbf{Stage II:} Let $r = \lfloor \sqrt{n} \rfloor$ and let $X_1, \ldots, X_r$ be pairwise disjoint subsets of $V_1$, each of size $\lfloor n^{1/4} \rfloor$. For every $1 \leq i \leq r$, in the $i$th round of this stage, Waiter offers Client $q+1$ arbitrary free edges of $E(X_i, \{u_1, \ldots, u_{\lfloor \eta/6 \rfloor}\})$.

\medskip

\noindent \textbf{Stage III:} Let $Y_1, \ldots, Y_r$ be pairwise disjoint subsets of $V_2$, each of size $\lfloor n^{1/4} \rfloor$. For every $1 \leq i \leq r$, in the $i$th round of this stage, Waiter offers Client $q+1$ arbitrary free edges of $E(Y_i, \{u_{\lceil \eta/3 \rceil}, \ldots, u_{\lceil \eta/2 \rceil}\})$.

\medskip

\noindent \textbf{Stage IV:} Let $W_1 = \{w \in V_1 : d_{G_C}(w, P) > 0\}$ and let $W_2 = \{w \in V_2 : d_{G_C}(w, P) > 0\}$. In the only round in this stage, Waiter offers Client $q+1$ arbitrary free edges of $E(W_1, W_2)$.

\bigskip

\noindent \textbf{Stage V:} Waiter offers all the remaining free edges in an arbitrary order.

\bigskip

It is evident that, if Waiter is able to play according to the proposed strategy, then at the end of the game, Client's graph will contain a cycle of length at least $\eta/2 - 2 \cdot \eta/6 = \eta/6$. Using our assumption that $\eta \geq 10 n^{3/4}$, it is easy to verify that $e(X_i, \{u_1, \ldots, u_{\lfloor \eta/6 \rfloor}\}) \geq q+1$ and $e(Y_i, \{u_{\lceil \eta/3 \rceil}, \ldots, u_{\lceil \eta/2 \rceil}\}) \geq q+1$ hold for every $1 \leq i \leq r$ and that $e(W_1, W_2) \geq q+1$.

\medskip

Hence, Waiter is able to follow Stages II, III and IV of the proposed strategy. As for Stage I, for every $1 \leq i < \eta/2$, in the $i$th round of the game, Waiter forces Client's graph to be a path $P_{i+1} = (u_1, \ldots, u_{i+1})$ such that every edge of $K_n[V_3 \setminus \{u_1, \ldots, u_i\}]$ is free. This is done by offering $q+1$ edges $u_i x_1, \ldots u_i x_{q+1}$ for arbitrary vertices $x_1, \ldots x_{q+1} \in V_3 \setminus \{u_1, \ldots, u_i\}$. These edges exist since $q+1 = n - \eta + 1 \leq |V_3| - i$. Finally, he can trivially follow Stage V of the proposed strategy.   
\end{proof}
 
Note that in the super-critical regime, i.e. when $q < (1 - o(1)) n$, our lower bound on $\cyc(n,q)$, stated in Theorem~\ref{thmcyc}, is of the same order of magnitude as our lower bound on $\comp(n,q)$, stated in Theorem~\ref{BigCompSup}. In particular, if $q$ is such that Waiter can force Client to build a connected graph, then he can also force him to build a cycle of length $\Theta(n)$.



\bigskip

We now wish to prove part (ii) of Theorem~\ref{th::pancyclic}. Although a formal proof is fairly long and technical, its main ideas are natural and not too complicated. First, Waiter forces Client to build an expander. Since Waiter's bias is linear, this result is quite different from all previous results concerning expander building games and thus requires new ideas. Then, using certain properties of Client's expander and various tools including the DFS algorithm and the building via blocking technique, Waiter forces Client to build a cycle of every short length. Finally, using P\'osa's extension-rotation technique, Waiter forces Client to build a cycle of each remaining length.         

Before describing our proof in greater detail, we discuss the well-known relation between expanders and Hamiltonicity. We begin by recalling some definitions and known results.

\begin{definition} \label{def::expander}
For real numbers $d \geq 1$ and $0 < \varepsilon < 1$, a graph $G$ is called a \emph{$(d,\eps)$-expander} if $|N_{G}(S)| \geq d|S|$ holds for every $S \subseteq V(G)$ such that $|S| \leq \eps |V(G)|$.
\end{definition}

\begin{definition} \label{def::halfExpander}
A bipartite graph $G$ with bipartition $(V_1, V_2)$ is called a \emph{$(d,\eps)$-half-expander} on $(V_1,V_2)$ if $|N_{G}(S)| \geq d|S|$ holds for every $S \subseteq V_1$ such that $|S| \leq \eps |V_1|$.
\end{definition} 

\begin{definition} \label{def::booster}
For a graph $G$, a non-edge $uv \notin E(G)$ is called a \emph{booster} of $G$, if either $G \cup \{uv\}$ is Hamiltonian, or the longest path in $G \cup \{uv\}$ is strictly longer than the longest path in $G$. 
\end{definition} 

The following two lemmata are essentially due to P\'osa~\cite{Posa} (see, e.g., Chapter 8.2 of~\cite{Bol} for more details). 

\begin{lemma} \label{lem::manyEndPoints}
Let $G$ be a $(2,\alpha)$-expander on $n$ vertices and let $u_1 \ldots u_n$ be a Hamilton path in $G$. Let $U = \{z \in V(G) : \textrm{ there exists a Hamilton path in } G \textrm{ between } u_1 \textrm{ and } z\}$, then $|U| \geq \alpha n$.
\end{lemma}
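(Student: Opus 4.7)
The plan is to apply P\'osa's rotation technique to the given Hamilton path $P_0 = u_1 u_2 \ldots u_n$ in order to enlarge the set $U$ of possible endpoints of Hamilton paths starting at $u_1$. Recall that a \emph{rotation} of a Hamilton path $v_1 v_2 \ldots v_n$ with $v_1 = u_1$, along an edge $v_n v_i \in E(G)$ with $1 \leq i \leq n-2$, produces the new Hamilton path $v_1 \ldots v_i v_n v_{n-1} \ldots v_{i+1}$, whose endpoint $v_{i+1}$ necessarily lies in $U$. Since $u_n \in U$, iterating this construction yields a non\-empty subset $U \subseteq V(G)$ whose elements are precisely (a subset of) endpoints of Hamilton paths starting at $u_1$ obtainable from $P_0$ by a sequence of rotations.

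The key step is the classical rotation lemma of P\'osa: for the set $U$ just described, one has $|N_G(U) \setminus U| < 2|U|$. This is proved by assigning to every $w \in N_G(U) \setminus U$ a rotation-successor in $U$, namely the successor of $w$ on a Hamilton path $P_z$ from $u_1$ to some $z \in U$ adjacent to $w$ (the rotation along the edge $zw$ turns this successor into a new endpoint). A short combinatorial analysis shows that this assignment is at most two-to-one, with an extra saving coming from the fixed endpoint $u_1$, which accounts for the strict inequality. I would supply a self-contained proof of this lemma, taking care that the Hamilton path witnessing a given $z \in U$ may be completely different from $P_0$.

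With the rotation lemma in hand, I would argue by contradiction. Assume $|U| < \alpha n$. Then in particular $|U| \leq \alpha |V(G)|$, so the $(2, \alpha)$-expander property of $G$ yields $|N_G(U)| \geq 2|U|$. By the conventions set out in the preliminaries, the neighbourhood $N_G(U)$ is disjoint from $U$, so
\[
|N_G(U) \setminus U| \;=\; |N_G(U)| \;\geq\; 2|U|,
\]
contradicting the P\'osa bound $|N_G(U) \setminus U| < 2|U|$. Therefore $|U| \geq \alpha n$, as claimed.

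The main obstacle in this plan is the P\'osa rotation lemma itself. Although classical, its proof is subtle: one must verify that the rotation-successor map is well defined despite the fact that the witnessing Hamilton path varies with $z \in U$, and that the two-to-one bound together with the boundary gain at $u_1$ is genuinely strict. Everything else in the argument is a clean application of the $(2,\alpha)$-expansion hypothesis together with the fact that $N_G(U)$ is defined as an \emph{external} neighbourhood.
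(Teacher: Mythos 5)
Your route is the classical P\'osa rotation--extension argument, and it is exactly what the paper intends: the paper gives no proof of this lemma at all, attributing it (together with the booster lemma) to P\'osa and pointing to Chapter 8.2 of Bollob\'as's book. The closing step --- playing the rotation bound against the $(2,\alpha)$-expansion of sets of size at most $\alpha n$ --- is clean and correct; just run it on the set $U_0$ of endpoints actually reachable from $P_0$ by rotations (a subset of the lemma's $U$), which only helps, since then $|U|\ge |U_0|>\alpha n$.

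Two details in your sketch of the rotation lemma itself are off and would cause trouble if transcribed literally. The correct statement is $N_G(U_0)\subseteq U_0^{-}\cup U_0^{+}$, where $U_0^{\pm}$ are the neighbours of $U_0$ along the \emph{original} path $P_0$; the content of the invariance argument is that any vertex $w\notin U_0\cup U_0^{-}\cup U_0^{+}$ retains both of its $P_0$-neighbours on every path obtained by rotations, so if some $z\in U_0$ were adjacent to $w$, rotating $P_z$ at $w$ would place a $P_0$-neighbour of $w$ into $U_0$, i.e.\ $w\in U_0^{-}\cup U_0^{+}$ after all. Mapping $w$ to its successor on the varying path $P_z$, as you propose, does not obviously yield a two-to-one map precisely because $P_z$ changes with $z$; the fixed reference path is what makes the count work. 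Second, the strict inequality $|U_0^{-}\cup U_0^{+}|\le 2|U_0|-1$ comes from the far endpoint $u_n\in U_0$, which has no successor on $P_0$ --- not from $u_1$. With these corrections the proof is the standard one and goes through.
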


\begin{lemma} \label{lem::boosters}
A connected non-Hamiltonian $(2, \alpha)$-expander on $n$ vertices has at least $\alpha^2 n^2/2$ boosters.
\end{lemma}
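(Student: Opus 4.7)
The plan is to apply P\'osa's rotation-extension technique twice in succession. Let $P = v_0 v_1 \ldots v_\ell$ be a longest path in $G$; since $P$ is longest, every edge of $G$ incident to an endpoint of any longest path ends inside the vertex set of that path, so new longest paths are generated from $P$ only by rotations. First, I apply rotations to $P$ while keeping $v_0$ fixed and assemble the set $A$ of vertices $a$ for which some longest path in $G$ runs from $v_0$ to $a$. By the same rotation-counting argument that underlies Lemma~\ref{lem::manyEndPoints} (which adapts verbatim from Hamilton paths to arbitrary longest paths, since its only ingredients are the endpoint-on-path property above and the $(2,\alpha)$-expansion hypothesis applied to subsets of $V(G)$ of size at most $\alpha n$), one obtains $|A| \geq \alpha n$. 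Next, for each $a \in A$, I fix one longest path $Q_a$ witnessing $a \in A$ and apply the rotation procedure to $Q_a$, now keeping $a$ fixed; by the same argument, the set $B_a$ of attainable second endpoints satisfies $|B_a| \geq \alpha n$, and $a \notin B_a$.

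I then verify that for every $a \in A$ and every $b \in B_a$, the pair $ab$ is a booster. Let $R_{a,b}$ be a longest path in $G$ between $a$ and $b$. In a connected non-Hamiltonian graph, no two endpoints of a longest path can be adjacent in $G$: adding such an edge to $R_{a,b}$ produces a cycle $C$ on $|V(P)|$ vertices, and if $C$ is non-spanning, connectivity supplies a vertex outside $C$ adjacent to $C$, which extends $C$ to a path longer than $P$; while if $C$ is spanning, $G$ would already be Hamiltonian. Both conclusions are contradictions, so $ab \notin E(G)$. Replaying the same cycle-extension argument inside $G \cup \{ab\}$ shows that either $R_{a,b}+ab$ is a Hamilton cycle in $G \cup \{ab\}$, or it can be extended to a path strictly longer than $P$; in either case $ab$ is a booster.

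Finally, the number of unordered pairs $\{a,b\}$ with $a \in A$ and $b \in B_a$ is at least $\tfrac{1}{2}|A| \cdot \min_{a \in A}|B_a| \geq \tfrac{1}{2}(\alpha n)(\alpha n) = \alpha^2 n^2/2$, and distinct unordered pairs yield distinct non-edges, hence distinct boosters. The only nontrivial step is establishing $|A| \geq \alpha n$: its proof is the classical P\'osa rotation count, but one must check that the statement of Lemma~\ref{lem::manyEndPoints} (which is phrased for Hamilton paths only) carries over to arbitrary longest paths in a $(2,\alpha)$-expander. I expect this to be the main obstacle, although it is essentially routine.
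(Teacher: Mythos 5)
Your proof is correct and is exactly the standard double-rotation argument of P\'osa that the paper invokes by citation (the lemma is stated without proof, deferring to~\cite{Posa} and Chapter 8.2 of~\cite{Bol}); the adaptation of Lemma~\ref{lem::manyEndPoints} from Hamilton paths to longest paths that you flag is indeed the only point needing care, and it goes through verbatim since all neighbours of an endpoint of a longest path lie on that path.
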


We will deduce Theorem~\ref{th::pancyclic}(ii) from a series of lemmata. Our first lemma asserts that, even when playing with a linear bias, Waiter can force Client to claim an edge between any two large disjoint sets. This technical lemma will be helpful while proving that Waiter can force Client to build a graph with good expanding properties.  

\begin{lemma}\label{expwf}
Let $G$ be a complete bipartite graph with bipartition $(U,W)$, where $|U| = n$ and $|W| = \lfloor 5n/6 \rfloor$. Let $d$ and $q = q(n) \leq n/(150d)$ be positive integers. Then, for sufficiently large $n$, playing a $(q : 1)$ Waiter-Client game on $E(G)$, Waiter has a strategy to force Client to claim at least one edge between $A$ and $B$ for any $A \subseteq U$ of size $\lceil n/(7d) \rceil$ and any $B \subseteq W$ of size $\lceil n/7 \rceil$.  
\end{lemma}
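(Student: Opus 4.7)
The natural tool is the building--via--blocking criterion of Theorem~\ref{cpwin}. Let $X = E(G)$ and let $\mathcal{F}$ be the family of all sets of the form $E_G(A,B)$ with $A \subseteq U$, $|A| = \lceil n/(7d)\rceil$ and $B \subseteq W$, $|B| = \lceil n/7\rceil$. A winning strategy for Waiter in $\WC(X, \mathcal{F}^{*}, q)$ means Client's edge-set meets every $E_G(A,B)$, which is exactly the conclusion. So my plan is to verify the hypothesis
\[
\sum_{F \in \mathcal{F}} 2^{-|F|/(2q-1)} < \tfrac{1}{2}.
\]

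First I would bound $|F|$ from below. Each $F \in \mathcal{F}$ is a complete bipartite graph of size $|F| = \lceil n/(7d)\rceil \cdot \lceil n/7\rceil \ge n^{2}/(49d)$. From $q \le n/(150d)$ we get $2q-1 \le n/(75d)$, so
\[
\frac{|F|}{2q-1} \;\ge\; \frac{n^{2}/(49d)}{n/(75d)} \;=\; \frac{75n}{49} \;>\; 1.5\,n,
\]
giving $2^{-|F|/(2q-1)} \le 2^{-1.5n}$ uniformly over $F \in \mathcal{F}$ and over $d \geq 1$.

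Next I would count $\mathcal{F}$ via the standard estimate $\binom{m}{k} \le (em/k)^{k}$:
\[
|\mathcal{F}| \;\le\; \binom{n}{\lceil n/(7d)\rceil}\binom{\lfloor 5n/6\rfloor}{\lceil n/7\rceil} \;\le\; (7de)^{\lceil n/(7d)\rceil}\left(\tfrac{35e}{6}\right)^{\lceil n/7\rceil}.
\]
Taking $\log_{2}$, the exponent is at most $(n/(7d))\log_{2}(7de) + (n/7)\log_{2}(35e/6) + o(n)$. The function $d \mapsto (\log_{2}(7de))/d$ is maximized at $d=1$, where it equals $\log_{2}(7e) < 4.3$; hence for every $d \ge 1$ the exponent is bounded by its value at $d=1$, which is approximately $(n/7)(\log_{2}(7e) + \log_{2}(35e/6)) < 1.2\,n$. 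Combining with the factor $2^{-1.5n}$ shows the whole sum is at most $2^{-0.3 n}$ for $n$ sufficiently large, comfortably below $1/2$, so Theorem~\ref{cpwin} applies.

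I do not expect any serious obstacle: the only thing to be careful about is that the bound on $|\mathcal{F}|$ must be uniform over admissible $d$, and the simple observation above (the $d=1$ case is the worst one, since increasing $d$ only shrinks the factor coming from $U$) handles this cleanly. The rest is routine numerical verification.
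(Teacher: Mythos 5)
Your proposal is correct and follows essentially the same route as the paper: both apply Theorem~\ref{cpwin} to the family of complete bipartite edge-sets $E_G(A,B)$ and verify the potential condition via the bound $\binom{m}{k}\le(em/k)^k$, differing only in the bookkeeping of the numerical estimates (the paper works with $\delta=1/150$ and checks $1/(98\delta)>\tfrac17\log_2 16+\tfrac{1}{7d}\log_2(20d)$, while you compare $1.2n$ against $1.5n$ after observing that $d=1$ is the worst case). No gaps.
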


\begin{proof}
Let ${\mathcal F}$ be the family of edge-sets of all induced subgraphs $G' \subseteq G$ with $V(G') = A \cup B$ such that $A \subseteq U$ is of size $a = \lceil n/(7d) \rceil$ and $B \subseteq W$ is of size $b = \lceil n/7 \rceil$ and let 
$$
\Psi({\mathcal F}) = \sum_{F \in {\mathcal F}} 2^{-|A|/(2q-1)} \,.
$$
Using this notation, it suffices to prove that Waiter has a winning strategy for the game $\WC(E(G), {\mathcal F}^*, q)$. In light of Theorem~\ref{cpwin}, it is thus enough to verify that $\Psi({\mathcal F}) < 1/2$. Let $m = \lfloor 5n/6 \rfloor$ and let $\delta = 1/150$. Then $q \leq \delta n/d$ and
\begin{eqnarray*} 
\Psi({\mathcal F}) &=&  \binom{n}{a} \binom{m}{b} 2^{- a b/(2q-1)} \leq \left(\frac{e n}{a} \right)^a \left(\frac{e m}{b} \right)^b 2^{- ab/(2q)} \leq \left(20 d \right)^{n/(7d)} \cdot 16^{n/7} \cdot 2^{- \frac{n}{7d} \cdot \frac{n}{7} \cdot \frac{d}{2 \delta n}} \\ 
&=& \left[\left(20 d \right)^{1/(7d)} \cdot 16^{1/7} \cdot 2^{- 1/(98 \delta)} \right]^n \,.
\end{eqnarray*}

For $\delta = 1/150$, it is easy to verify that $1/(98 \delta) > 1/7 \cdot \log_2 16 + 1/(7d) \cdot \log_2 (20d)$. For sufficiently large $n$, it then follows that
$$
\Psi({\mathcal F}) < \frac{1}{2}
$$
as claimed. 
\end{proof}

Our second lemma asserts that, playing on a complete bipartite graph, Waiter can force Client to build a half-expander. 

\begin{lemma}\label{halfexpandPC}
Let $G$ be a complete bipartite graph with bipartition $(V_1,V_2)$, where $|V_1| = n$ and $|V_2| \geq n-1$. Let $d$ and $q = q(n) \leq n/(150d)$ be positive integers. Then, for sufficiently large $n$, playing a $(q : 1)$ Waiter-Client game on $G$, Waiter can force Client to build a $(d,2/(3d))$-half-expander on $(V_1,V_2)$. 
\end{lemma}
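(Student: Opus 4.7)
The plan is to recast the half-expander condition as a transversal (blocking) requirement and then apply the building-via-blocking criterion of Theorem~\ref{cpwin}. Specifically, observe that $G_C$ fails to be a $(d,2/(3d))$-half-expander on $(V_1,V_2)$ precisely when there exist $S\subseteq V_1$ and $T\subseteq V_2$ with $1\le |S|=:s\le \lfloor 2n/(3d)\rfloor$, $|T|=ds-1$, and $N_{G_C}(S)\subseteq T$; equivalently, all edges of $E_G(S,V_2\setminus T)$ end up in Waiter's graph. Hence, if we let
\[
\mathcal{F} \;=\; \bigl\{\, E_G(S,V_2\setminus T) \,:\, S\subseteq V_1,\; T\subseteq V_2,\; 1\le |S|=s\le \lfloor 2n/(3d)\rfloor,\; |T|=ds-1\,\bigr\},
\]
it suffices for Waiter to win the transversal game $\WC(E(G),\mathcal{F}^*,q)$: that will force Client to claim at least one edge in every $F\in\mathcal{F}$, which exactly rules out every potential defect pair and gives the desired half-expander.

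By Theorem~\ref{cpwin}, it is enough to verify that $\Psi:=\sum_{F\in\mathcal{F}} 2^{-|F|/(2q-1)} < \tfrac12$. I would split the sum by the size $s=|S|$, writing $\Psi=\sum_s \Psi_s$ with
\[
\Psi_s \;=\; \binom{n}{s}\binom{|V_2|}{ds-1}\, 2^{-s(|V_2|-ds+1)/(2q-1)}.
\]
Using $|V_2|\ge n-1$ and the hypothesis $q\le n/(150d)$ (so $2q-1\le n/(75d)$), the exponent satisfies
\[
\frac{s(|V_2|-ds+1)}{2q-1} \;\ge\; \frac{75\,d\,s(|V_2|-ds+1)}{n},
\]
which is roughly $75ds$ when $s$ is small (so $ds\ll n$) and grows like $\Theta(sn)$ as $s$ approaches $2n/(3d)$. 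Combined with standard estimates $\binom{n}{s}\le (en/s)^{s}$ and $\binom{|V_2|}{ds-1}\le (e|V_2|/(ds-1))^{ds-1}$, this lets me control each $\Psi_s$ separately.

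The main technical obstacle is the sum itself: the exponential decay in $s$ handily dominates for the middle and large $s$ (say $s\ge \lceil n/(7d)\rceil$, where one may also appeal to Lemma~\ref{expwf} directly, taking $W\subseteq V_2$ with $|W|=\lfloor 5n/6\rfloor$, to see that $|N_{G_C}(S,W)|\ge 2n/3\ge ds$), whereas the small-$s$ range is delicate, since $\Psi_1$ already contains a binomial of polynomial size $\Theta(n^{d-1})$ against the single exponential factor $2^{-75d}$. The constant $150$ in $q\le n/(150d)$ has been calibrated precisely so that, upon pushing all estimates through (in particular, using $\binom{|V_2|}{ds-1}\le (e|V_2|)^{ds-1}/(ds-1)!$ and absorbing the factorial), the contributions from the small-$s$ terms can be bounded by a geometric series in $s$ whose ratio is strictly less than $1$ once $n$ is sufficiently large relative to $d$, and the whole sum $\Psi$ drops below $\tfrac12$. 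This sum bound is the heart of the proof; once it is in place, Theorem~\ref{cpwin} yields Waiter's winning strategy immediately.
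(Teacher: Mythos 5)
There is a genuine gap, and it sits exactly where you flagged the difficulty: the small-$s$ terms of your potential sum do not just require careful calibration of the constant $150$ — they diverge, and no constant can save them. Take $s=1$ and the extremal bias $q=\lfloor n/(150d)\rfloor$ (which is the case you must handle, since smaller $q$ only helps Waiter). Then $|F|=|V_2|-d+1\le n$, so the exponent $|F|/(2q-1)$ is at most roughly $75d$, a quantity independent of $n$, while the number of pairs $(S,T)$ with $|S|=1$, $|T|=d-1$ is $n\binom{|V_2|}{d-1}=\Theta(n^{d})$. Hence $\Psi_1=\Theta(n^{d})\cdot 2^{-\Theta(d)}\to\infty$ for every fixed $d$, and $\Psi<\tfrac12$ is false. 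The exponential saving per set is bounded (in $n$) precisely because a single vertex's edge set has size only $\Theta(n)$ against a bias of $\Theta(n/d)$, whereas the number of candidate "small defects" grows polynomially in $n$. So Theorem~\ref{cpwin} cannot be applied to the full family $\mathcal{F}$; your argument only works for the range of $s$ where $s(|V_2|-ds)$ is superlinear in $n$, i.e.\ for $s$ at least of order $n/d$ (up to logarithmic corrections).

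The paper's proof supplies the missing idea with a two-stage strategy. In Stage I it applies the blocking lemma (Lemma~\ref{expwf}) only to pairs $(A,B)$ with $|A|=\lceil n/(7d)\rceil$ and $|B|=\lceil n/7\rceil$, restricted to a reserved subset $W\subseteq V_2$ of size $\lfloor 5n/6\rfloor$; this guarantees expansion of all sets of size between $n/(7d)$ and $2n/(3d)$, which is the regime where the potential sum genuinely converges. Small sets are then repaired \emph{constructively} in Stage II: one shows that the union $U$ of all inclusion-minimal non-expanding sets of size at most $n/(7d)$ has fewer than $n/(7d)$ vertices (otherwise one could assemble a medium-sized non-expanding set, contradicting Stage I), and Waiter gives each vertex of $U$ its own $d$ private Client-neighbours among the so-far-untouched vertices of $V_2\setminus W$. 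A short decomposition argument then shows every small set expands. To fix your proof you would need to add some version of this second mechanism; the transversal reduction alone cannot handle sets of size $o(n/d)$.
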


\begin{proof}
Let $G$ and $q$ be as in the statement of the lemma. Before a formal analysis, let us present the idea behind Waiter's strategy. First he will force Client to build a spanning subgraph of $G$ which is almost a half-expander. By ``almost'' we mean that some very small subsets of $V_1$ may not expand. Then for every vertex of these exceptional sets Waiter will generate its own $d$ new Client neighbours; Client's graph obtained this way will contain the required half-expander. 

Here is a detailed description of Waiter's strategy. Let $W \subseteq V_2$ be a set of size $\lfloor 5n/6 \rfloor$ and let $G' = G[V_1 \cup W]$. The strategy for Waiter is divided into the following two stages. 

\bigskip

\noindent \textbf{Stage I:} Offering only edges of $G'$, Waiter forces Client to build a graph $H \subseteq G'$ such that $E_H(A,B) \neq \emptyset$ for every $A \subseteq V_1$ of size $a = \lceil n/(7d) \rceil$ and every $B \subseteq W$ of size $b = \lceil n/7 \rceil$. 

\bigskip

\noindent \textbf{Stage II:} At the beginning of this stage, let ${\mathcal S}$ denote the family of all inclusion minimal sets $A \subseteq V_1$ such that $|A| \leq n/(7d)$ and $|N_{H}(A)| < d|A|$. Let $U = \{x_1, \ldots, x_r\}$ denote the union of all sets $A \in {\mathcal S}$. At any point during this stage, let $Y = \{y \in V_2 \setminus W : d_{G_C}(y) = 0\}$. For every $1 \leq i \leq r$ and every $1 \leq j \leq d$, in round $(i-1)d + j$ of this stage, Waiter offers Client $q+1$ arbitrary free edges of $E_G(x_i, Y)$.

\bigskip

It follows by Lemma~\ref{expwf} that Waiter can play according to Stage I of the proposed strategy. Moreover, we claim that $|N_{H}(A)| \geq d|A|$ holds at the end of Stage I for every $A \subseteq V_1$ such that $n/(7 d) \leq |A| \leq 2n/(3d)$. Indeed, suppose for a contradiction that there exists a set $A \subseteq V_1$ such that $n/(7 d) \leq |A| \leq 2n/(3d)$ and yet $|N_{H}(A)| < d|A|$. Then $|W \setminus N_{H}(A)| > 5n/6 - d|A| \geq n/6$ but $E_H(A, W \setminus N_{H}(A)) = \emptyset$ contrary to Stage I of the proposed strategy.

Our next goal is to show that Waiter can play according to Stage II of the proposed strategy as well. We first claim that $r < n/(7d)$. Indeed, otherwise there exist sets $A_1, \ldots, A_t \in {\mathcal S}$ such that $n/(7d) \leq |T| \leq 2n/(7d)$, where $T := \bigcup_{i=1}^t A_i$. However, it follows by the definition of ${\mathcal S}$ and by a simple inductive argument that $|N_{H}(T)| < d|T|$ contrary to our proof that sets of such sizes expand. It now follows that $|Y| \geq |V_2 \setminus W| - d r \geq n/6 - 1 - n/7 > (q+1)d$ holds at any point during Stage II. Hence, Waiter can indeed play according to Stage II of the proposed strategy as claimed. 

In order to complete the proof of our claim, that by the end of the game, Client's graph is a $(d,2/(3d))$-half-expander on $(V_1, V_2)$, it remains to show that small sets expand as well. Let $A \subseteq V_1$ be an arbitrary set of size $1 \leq |A| \leq n/(7d)$ such that $|N_{H}(A)| < d|A|$. Then there exists a partition $B \cup X_1 \cup \ldots \cup X_p$ of $A$ (it is possible that $B = \emptyset$), where $|N_{H}(B)| \geq d|B|$, $p \geq 1$ and $X_i \in {\mathcal S}$ for every $1 \leq i \leq p$ (we simply successively remove inclusion minimal non-expanding subsets of $A$ until we are left with an expanding set). It follows by Stage II of the proposed strategy that $|N_{G_C}(X_i, V_2 \setminus W)| \geq d|X_i|$ holds for every $1 \leq i \leq p$ and that $N_{G_C}(X_i, V_2 \setminus W) \cap N_{G_C}(X_j, V_2 \setminus W) = \emptyset$ holds for every $1 \leq i < j \leq p$. We conclude that $|N_{G_C}(A)| \geq |N_H(B)| +  \sum_{i=1}^p |N_{G_C}(X_i, V_2 \setminus W)| \geq d|B| + \sum_{i=1}^p d|X_i| \geq d|A|$.       
\end{proof}

Our third lemma asserts that every not too large set of vertices of a half-expander can be covered by a matching.

\begin{lemma} \label{lem::expanderMatching}
Let $d \geq 1$ and $r \geq 4$ be integers. Let $X$ and $Y$ be disjoint sets of sizes $|X|, |Y| \in \{r-1, r\}$ and let $G$ be a $(d, 2/(3d))$-half-expander on $(X,Y)$. Then, for every $T \subseteq X$ of size at most $r/(2d)$, there is a matching in $G$ which contains every vertex of $T$.
\end{lemma}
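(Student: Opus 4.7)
The plan is to verify Hall's condition for matching $T$ into $Y$, then invoke Hall's theorem. Specifically, I will show that for every $S \subseteq T$, one has $|N_G(S,Y)| \geq |S|$, which by Hall's marriage theorem yields a matching in $G$ saturating $T$.

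First I would unpack the hypotheses. Any $S \subseteq T$ satisfies $|S| \leq |T| \leq r/(2d)$. I want to apply the half-expander property, which requires $|S| \leq (2/(3d))|X|$. Since $|X| \geq r-1$, it suffices to check that
\[
\frac{r}{2d} \;\leq\; \frac{2(r-1)}{3d},
\]
which rearranges to $3r \leq 4(r-1)$, i.e., $r \geq 4$. This is exactly the hypothesis on $r$, so the expansion property is applicable to every $S \subseteq T$.

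Applying the $(d, 2/(3d))$-half-expander condition then gives $|N_G(S,Y)| \geq d|S| \geq |S|$, where the last inequality uses $d \geq 1$. Hence Hall's condition holds for the bipartite graph induced on $(T, Y)$, and the desired matching saturating $T$ exists.

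There is no substantive obstacle here — the argument is essentially an inequality check to align the two constants $1/(2d)$ (from the size of $T$) and $2/(3d)$ (from the expansion threshold). The only thing worth flagging is that the bound $r \geq 4$ is tight for this approach: if $r$ were $3$ one would not be able to guarantee the expansion property applies to sets of size up to $r/(2d)$ when $|X| = r-1$.
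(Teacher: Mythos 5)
Your proof is correct and follows exactly the paper's argument: verify Hall's condition for every $S \subseteq T$ by checking $|S| \leq r/(2d) \leq 2(r-1)/(3d) \leq \frac{2}{3d}|X|$ (which is where $r \geq 4$ enters), apply the half-expander property to get $|N_G(S)| \geq d|S| \geq |S|$, and invoke Hall's theorem.
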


\begin{proof}
Let $T \subseteq X$ be an arbitrary set of size $t \leq r/(2d)$ and let $S \subseteq T$ be non-empty. Since $1 \leq |S| \leq r/(2d) \leq 2(r-1)/(3d)$ and since $G$ is a $(d, 2/(3d))$-half-expander on $(X,Y)$, it follows that $|N_G(S)| \geq |S|$. By Hall's theorem we conclude that the required matching exists. 
\end{proof}

Our fourth lemma asserts that half-expanders admit a long path with certain additional properties.

\begin{lemma} \label{lem::longPath}
Let $d \geq 2$, let $r$ be a sufficiently large integer and let $m = \lceil 6r/5 \rceil$. For $0 \leq i \leq 3$ let $X_i$ be a set of size $|X_i| \in \{r-1, r\}$ and let $G_i$ be a $(d, 2/(3d))$-half-expander on $(X_i, X_{i+1})$, where addition is taken modulo 4. Then there exists a path $P_{m+1} = (v_0 \ldots v_m)$ in $G_0 \cup G_1 \cup G_2 \cup G_3$ such that, for every $0 \leq j \leq m$ and $0 \leq i \leq 3$, the vertex $v_j$ is in the set $X_i$ if and only if $j \equiv i \mod 4$.  
\end{lemma}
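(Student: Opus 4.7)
My plan is to construct the desired $4$-periodic alternating path incrementally, exploiting the one-sided expansion of the half-expanders $G_i$. For a fixed starting vertex $v_0 \in X_0$, I let $S_k \subseteq X_{k \bmod 4}$ denote the set of those $v$ for which there exists a simple path $v_0 v_1 \cdots v_k = v$ in $G_0 \cup G_1 \cup G_2 \cup G_3$ satisfying $v_j \in X_{j \bmod 4}$ for every $0 \le j \le k$. The goal is to prove by induction on $k$ that $S_k \neq \emptyset$ for every $0 \le k \le m$; applying this with $k = m$ then produces the required path.

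The argument splits into two phases. In the first ``exponential-growth'' phase, as long as $|S_{k-1}| \le 2r/(3d)$, the half-expansion of $G_{(k-1) \bmod 4}$ gives $|N_{G_{(k-1)\bmod 4}}(S_{k-1})| \ge d\,|S_{k-1}|$; since at this early stage any length-$(k-1)$ path has at most $\lceil k/4 \rceil$ vertices in the target class $X_{k \bmod 4}$, the losses from path-avoidance are dominated by the expansion and one obtains $|S_k| \gtrsim d^k$. Once $|S_{k-1}|$ exceeds $2r/(3d)$ we enter the ``saturated'' phase: applying expansion to any subset of $S_{k-1}$ of size $\lceil 2r/(3d) \rceil$ yields a neighborhood of size at least $2r/3$ in $X_{k \bmod 4}$. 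Since $\lceil m/4 \rceil + 1 \le \lceil 3r/10 \rceil + 1$ is strictly below $2r/(3d) = r/3$ (for $d = 2$ and large $r$), the number of $X_{k \bmod 4}$-vertices appearing on any length-$m$ path is well under this expansion image, so enough candidate extensions survive to keep $|S_k|$ of linear order for the rest of the process.

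The main obstacle, which I expect to require the most care, is the path-avoidance constraint hidden in the definition of $S_k$: a candidate endpoint $w \in N_{G_{(k-1)\bmod 4}}(S_{k-1})$ may fail to lie in $S_k$ if \emph{every} length-$(k-1)$ path in our collection ending at an in-neighbor of $w$ happens to traverse $w$ somewhere along the way. To circumvent this I would strengthen the inductive hypothesis, keeping track not just of $S_k$ but also of a judiciously chosen representative length-$k$ path for each endpoint in $S_k$, and then run a pigeonhole/averaging argument to show that for the vast majority of $w \in N_{G_{(k-1)\bmod 4}}(S_{k-1})$ at least one path in the collection avoids $w$. The quantitative gap between the number of path-vertices in any class (at most $\lceil 3r/10 \rceil + 1$) and the expansion image (at least $2r/3$) provides the slack that makes this averaging go through, and it is precisely the balancing of these inequalities that forces the constant $6/5$ in the definition $m = \lceil 6r/5 \rceil$.
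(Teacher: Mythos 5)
Your overall strategy (grow the set $S_k$ of reachable endpoints by induction, using half-expansion at each step) is not the paper's route, and the step you yourself flag as the main obstacle is exactly where the argument breaks; the pigeonhole/averaging fix you propose does not close it. Call $w\in N_{G_{(k-1)\bmod 4}}(S_{k-1})$ \emph{bad} if every in-neighbour $v$ of $w$ in $S_{k-1}$ has $w$ on its representative path $P_v$. The only quantitative handle your bookkeeping gives is the incidence count
$\#\{(v,w): v\in S_{k-1},\ w\in P_v\cap X_{k\bmod 4}\}\le |S_{k-1}|\,(\lceil m/4\rceil+1)$,
and since a bad $w$ may have just a \emph{single} in-neighbour in $S_{k-1}$ (half-expansion bounds neighbourhood sizes, not degrees), this only yields $|B|\le |S_{k-1}|(\lceil m/4\rceil+1)$. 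In the saturated phase this is of order $r^2$, while the expansion image you are comparing against has size only about $2r/3$; even the crude union bound $|S_k|\ge |N(S_{k-1})|-|S_{k-1}|(\lceil m/4\rceil+1)$ is negative already for $d=2$ and moderate $k$ (one needs $k\ll 4d$ for it to be positive, so your ``exponential-growth'' phase also stalls after a bounded number of steps). No contradiction or survival guarantee follows, and I do not see how to repair the averaging without an essentially new idea (e.g.\ rotations, or tracking whole families of internally disjoint paths).

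For contrast, the paper avoids this bookkeeping entirely. It orients each edge from $X_i$ to $X_{i+1}$ and runs DFS on the resulting digraph. At every time $t$ there are no arcs from the set $S^t$ of finished vertices to the set $T^t$ of unvisited vertices, and the active stack $U^t$ spans a directed (hence automatically $4$-periodic) path. At the moment $t_0$ with $|S^{t_0}|=|T^{t_0}|$ there is an index $i$ with $|S^{t_0}\cap X_i|\ge|T^{t_0}\cap X_i|$ and $|S^{t_0}\cap X_{i+1}|\le|T^{t_0}\cap X_{i+1}|$; half-expansion of $G_i$ then forces either $S^{t_0}\cap X_i$ or $T^{t_0}\cap X_{i+1}$ to be small, so $U^{t_0}$ meets some class in at least about $r/3$ vertices, and since consecutive classes along a directed path differ in multiplicity by at most one, $|U^{t_0}|\ge 4(r-4)/3-4>m+4$. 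Truncating this path to start in $X_0$ finishes the proof. You would be better served by adopting this (or an equivalent stuck-free) mechanism than by trying to patch the endpoint-set induction.
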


\begin{proof}
Let $H = G_0 \cup G_1 \cup G_2 \cup G_3$ and let $\vec{H}$ be obtained from $H$ by orienting an edge $uv$ from $u$ to $v$ if and only if $u \in X_i$ and $v \in X_{i+1}$ for some $0 \leq i \leq 3$ (here, and throughout this proof, $X_{i+1}$ should be read as $X_0$ if $i=3$). Note that, in order to prove the lemma, it suffices to find a directed path of length $m$ in $\vec{H}$ starting at $X_0$. In order to do so, we apply the DFS algorithm to $\vec{H}$ (a similar argument can be found, e.g., in~\cite{KS}).    

For every non-negative integer $t$, we denote by $S^t$ the set of vertices of $\vec{H}$ whose exploration is complete after $t$ steps of the algorithm, by $T^t$ the set of vertices of $\vec{H}$ not visited thus far and put $U^t = V(\vec{H}) \setminus (S^t \cup T^t)$. Note that, for every $t \geq 0$, there are no arcs of $\vec{H}$ from $S^t$ to $T^t$. Moreover, $U^t$ spans a directed path in $\vec{H}$ for every $t \geq 0$. Therefore, it suffices to prove that there exists some $t \geq 0$ for which $|U^t| \geq m+4$.  

Since, $S^0 = \emptyset$, $T^0 = V(\vec{H})$ and, in every step of the algorithm, either $|S^t|$ is increased by 1 or $|T^t|$ is decreased by 1, it follows that there must be a step, say $t_0$, such that $|S^{t_0}| = |T^{t_0}|$. Hence, there exists an index $0 \leq i \leq 3$ such that  
\begin{equation} \label{eq::largeIntersection}
|S^{t_0} \cap X_i| \geq |T^{t_0} \cap X_i| \quad \textrm{and} \quad |S^{t_0} \cap X_{i+1}| \leq |T^{t_0} \cap X_{i+1}| \,.
\end{equation}

Since $G_i$ is a $(d,2/(3d))$-half-expander on $(X_i, X_{i+1})$ and since $S^{t_0} \cap X_i$ has no neighbors in $T^{t_0} \cap X_{i+1}$, we infer that either  
\begin{equation} \label{eq::smallSet}
|S^{t_0} \cap X_i| \leq \frac{2}{3d} \cdot |X_i|
\end{equation}
or
\begin{equation} \label{eq::largeSet}
|S^{t_0} \cap X_i| > \frac{2}{3d} \cdot |X_i| \quad \textrm{and} \quad |T^{t_0} \cap X_{i+1}| \leq |X_{i+1}| - \frac{2}{3} \cdot |X_i| \,.
\end{equation}

Combining~\eqref{eq::largeIntersection} and~\eqref{eq::smallSet} we obtain that 
\begin{equation} \label{eq::longPath1}
|U^{t_0} \cap X_i| \geq |X_i| - |S^{t_0} \cap X_i| - |T^{t_0} \cap X_i| \geq |X_i| - 2 |S^{t_0} \cap X_i| \geq \left(1 - \frac{4}{3d} \right) |X_i| \geq \frac{1}{3} |X_i| \geq \frac{r-1}{3} \,.
\end{equation}

Similarly, combining~\eqref{eq::largeIntersection} and~\eqref{eq::largeSet} we obtain that 
\begin{equation} \label{eq::longPath2}
|U^{t_0} \cap X_{i+1}| \geq |X_{i+1}| - |T^{t_0} \cap X_{i+1}| - |S^{t_0} \cap X_{i+1}| \geq |X_{i+1}| - 2 |T^{t_0} \cap X_{i+1}| \geq \frac{4}{3} \cdot |X_i| - |X_{i+1}| \geq \frac{r-4}{3} \,.
\end{equation}

Since the vertices of $U^{t_0}$ span a directed path in $\vec{H}$ we have that $||U^{t_0} \cap X_i| - |U^{t_0} \cap X_j|| \leq 1$ for every $0 \leq i,j \leq 3$. Consequently, it follows by~\eqref{eq::longPath1} and~\eqref{eq::longPath2} that  
$$
|U^{t_0}| = \sum_{i=0}^3 |U^{t_0} \cap X_i| \geq 4 \left(\frac{r-4}{3} - 1 \right) > \frac{6 r}{5} + 4 \,,
$$  
where the last inequality holds for sufficiently large $r$.
\end{proof}

Our fifth lemma asserts that, playing on $E(K_n)$, Waiter can quickly force Client to build a connected expander which admits a cycle of every short length.

\begin{lemma}\label{exp_cyc}
Let $d \geq 4$ and let $n$ be a sufficiently large integer. If $q \leq n/(1000 d)$, then, playing a $(q : 1)$ Waiter-Client game on $E(K_n)$, Waiter has a strategy to ensure that after at most $200 n d$ rounds, Client's graph $G_C$ will satisfy all of the following properties:
\begin{enumerate}[{\bf (i)}]
\item $G_C$ is a $(d/2-1, 1/(20d))$-expander;
\item $G_C$ contains a cycle of length $k$ for every $3 \leq k \leq \lceil n/6 \rceil$;
\item $G_C$ is connected.
\end{enumerate}
\end{lemma}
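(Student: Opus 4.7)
The core idea is to apply Lemma~\ref{halfexpandPC} four times in a cyclic configuration and then derive expansion, short cycles, and connectivity from the resulting structure. Partition $V(K_n) = V_0 \sqcup V_1 \sqcup V_2 \sqcup V_3$ with $|V_i| \in \{\lfloor n/4 \rfloor, \lceil n/4 \rceil\}$. For each $i$ (addition mod~$4$), Waiter plays a sub-game on the complete bipartite graph between $V_i$ and $V_{i+1}$, using the strategy of Lemma~\ref{halfexpandPC} to force Client's bipartite subgraph $H_i \subseteq G_C$ to be a $(d, 2/(3d))$-half-expander on $(V_i, V_{i+1})$. The hypothesis $q \leq |V_i|/(150 d)$ holds since $|V_i| \geq \lfloor n/4 \rfloor$ and $q \leq n/(1000 d)$. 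By Proposition~\ref{fast}, each sub-game terminates in at most $O(nd)$ rounds, and the four sub-games together occupy at most a fraction of the $200 n d$ round budget, leaving room for the additional sub-games needed below.

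For property~(i), let $S \subseteq V(K_n)$ with $|S| \leq n/(20d)$ and write $S_i = S \cap V_i$. Then $|S_i| \leq n/(20d) \leq 2|V_i|/(3d)$, so the half-expander property on $H_i$ yields $|N_{H_i}(S_i)| \geq d|S_i|$, and the neighborhoods $N_{H_i}(S_i) \subseteq V_{i+1}$ lie in pairwise disjoint parts, so
\[
|N_{G_C}(S)| \;\geq\; \sum_{i=0}^{3} \bigl( d|S_i| - |S_{i+1}| \bigr) \;=\; (d-1)|S| \;\geq\; (d/2 - 1)|S|,
\]
which proves~(i). For~(iii), let $C$ be a component of $G_C$ and put $c_i = |C \cap V_i|$. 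Applying the half-expander condition to a singleton $\{v\} \subseteq V_i$ gives minimum degree $\geq d$ from each $V_i$ into $V_{i+1}$, so $C$ meets every $V_i$; moreover if $c_i \leq 2|V_i|/(3d)$ for every $i$ then the half-expander property forces $c_{i+1} \geq d c_i$, and iterating around the $4$-cycle gives $c_0 \geq d^4 c_0$, hence $c_i = 0$ for all $i$. Thus every nonempty component is heavy in some $V_i$, and I would invest a modest additional round budget in a further sub-game (for instance, a half-expander on the coarsened bipartition $(V_0 \cup V_2, V_1 \cup V_3)$) that forces cross-links merging any two heavy components.

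For property~(ii), Lemma~\ref{lem::longPath} applied to $H_0, H_1, H_2, H_3$ yields a path $P = v_0 v_1 \ldots v_m \subseteq G_C$ of length $m \geq \lceil 6 |V_0|/5 \rceil \geq \lceil n/6 \rceil + 5$ visiting the four parts cyclically, so any chord edge $v_j v_{j+k-1}$ of $P$ in $G_C$ closes a cycle of length exactly $k$. Using the $(d-1)$-expansion established in~(i), for each $3 \leq k \leq \lceil n/6 \rceil$ there is a large family $S_k \subseteq E(K_n)$ of candidate chord edges, generated by the many alternative $G_C$-paths of length $k-1$ between suitable pairs of vertices that the expansion produces. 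I would conclude by applying Theorem~\ref{cpwin} with $\mathcal{F} = \{S_3, S_4, \ldots, S_{\lceil n/6 \rceil}\}$ in one final sub-game to force Client to claim at least one chord of each target length. The main obstacle is the verification of the potential inequality $\sum_k 2^{-|S_k|/(2q-1)} < 1/2$ of Theorem~\ref{cpwin}: there are $\Theta(n)$ target lengths and $q = \Theta(n/d)$, so each $|S_k|$ must be at least $\Theta(n \log n / d)$, and proving the requisite lower bound requires exploiting the strong expansion of $G_C$ to count many length-$(k-1)$ $G_C$-paths between suitable endpoint pairs, rather than relying on the single DFS path $P$. A secondary obstacle is interleaving these games with the connectivity-forcing sub-game from the previous paragraph without exhausting the $200 n d$-round budget.
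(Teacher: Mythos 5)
Your overall architecture (cyclically arranged half-expanders via Lemma~\ref{halfexpandPC}, a long DFS path via Lemma~\ref{lem::longPath}, chords of that path to create cycles) matches the paper's, and your derivation of property (i) is correct — in fact slightly cleaner than the paper's, since with four pairwise disjoint right-parts no double-counting correction is needed. However, there are two genuine gaps. The main one is property (ii): your plan is to apply Theorem~\ref{cpwin} to families $S_k$ of candidate chords, one per target length $k$. With $q = \Theta(n/d)$ and $\Theta(n)$ target lengths, the potential condition forces $|S_k| = \Omega\left((n/d)\log n\right)$ for every $k$, while the chords of a single path supply only $O(n)$ candidates per length; so you would need $\Omega\left((n/d)\log n\right)$ vertex pairs joined by $G_C$-paths of length exactly $k-1$, for every $k$ — essentially a quantitative pancyclicity statement for expanders, which is a substantial unproved step, not a routine verification. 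The paper sidesteps this entirely: for each target length it spends a single round offering $q+1$ \emph{parallel} chords of the same span along the path (e.g.\ $\{v_{4i+1}v_{4i+4j+1} : 0 \leq i \leq q\}$), so whichever one Client takes closes a cycle of exactly the right length, with no potential argument at all. This trick also requires the offered chords to still be free, which is why the paper uses six parts and seven half-expanders: the auxiliary matchings into $V_5, V_6$ handle the residues mod $4$ whose natural chords would land in bipartite pairs already consumed during the half-expander stage. With only your four cyclic pairs, the chords realizing even cycle lengths fall into already-played bipartite graphs and may already belong to Waiter.

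The second gap is the merging step for property (iii). You correctly show every component is heavy, i.e.\ occupies more than a $\frac{2}{3d}$ fraction of some $V_i$, but a further half-expander on a coarsened bipartition gives no information about sets larger than a $\frac{2}{3d}$ fraction of a side, and Lemma~\ref{expwf} forces an edge only between a set of size $\Theta(n/d)$ and a set of size $\Theta(n)$ — it cannot link two components both of size $\Theta(n/d)$. Some concrete mechanism is needed; the paper forces $G_C[V_i]$ to be spanning connected for all but one part using Theorem~\ref{th::connectivity} (applicable since $q \leq \lfloor |V_i|/2 \rfloor - 1$), and then chains the parts together through the half-expanders.
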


\begin{proof}
By Proposition~\ref{fast} we may assume that $q = \lceil n/(1000 d) \rceil$. In order to prove the lemma, we present a strategy for Waiter which is divided into six stages. In the first stage Waiter will ensure Property (i). In the next four stages he will ensure Property (ii); each of these stages is devoted to cycles of a specific remainder modulo 4. Finally, in the last stage Waiter will ensure Property (iii).  

\bigskip

\noindent \textbf{Stage I:} Let $V_1 \cup \ldots \cup V_6$ be an equipartition of $V(K_n)$. In at most $198 n d$ rounds, Waiter forces Client to build $(d, 2/(3d))$-half-expanders $G_1, G_2, G_3, G_4, G_5, G_6, G_7$ on $(V_1, V_2)$, $(V_2, V_3)$, $(V_3, V_4)$, $(V_4, V_1)$, $(V_1, V_5)$, $(V_5, V_6)$, and $(V_6, V_2)$, respectively.    

\bigskip

\noindent \textbf{Stage II:} Let $m = \lceil n/5 \rceil$ and let $P_m = (v_1 \ldots v_m)$ be a path in $G_1 \cup G_2 \cup G_3 \cup G_4$ such that, for every $1 \leq t \leq m$ and $1 \leq r \leq 4$, the vertex $v_t$ is in the set $V_r$ if and only if $t \equiv r \mod 4$. In this stage Waiter offers only edges with both endpoints in $V_1$. For every positive integer $j$ such that $4j + 1 \leq \lceil n/6 \rceil$, in the $j$th round of this stage, Waiter offers all edges of $\{v_{4i+1} v_{4i+4j+1} : 0 \leq i \leq q\}$. 

\bigskip

\noindent \textbf{Stage III:} In this stage Waiter offers only edges with one endpoint in $V_1$ and one endpoint in $V_3$. For every positive integer $j$ such that $4j - 1 \leq \lceil n/6 \rceil$, in the $j$th round of this stage, Waiter offers all edges of $\{v_{4i+1} v_{4i+4j-1} : 0 \leq i \leq q\}$. 

\bigskip

\noindent \textbf{Stage IV:} Let $\{v_{4i+1} x_{4i+1} : 0 \leq i \leq q\}$ be the edges of a matching in $G_5$. In this stage Waiter offers only edges with one endpoint in $V_5$ and one endpoint in $V_3$. For every positive integer $j$ such that $4j \leq \lceil n/6 \rceil$, in the $j$th round of this stage, Waiter offers all edges of $\{x_{4i+1} v_{4i+4j-1} : 0 \leq i \leq q\}$. 

\bigskip

\noindent \textbf{Stage V:} Let $\{x_{4i+1} y_{4i+1} : 0 \leq i \leq q\}$ be the edges of a matching in $G_6$. In this stage Waiter offers only edges with one endpoint in $V_6$ and one endpoint in $V_4$. For every positive integer $j$ such that $4j + 2 \leq \lceil n/6 \rceil$, in the $j$th round of this stage, Waiter offers all edges of $\{y_{4i+1} v_{4i+4j} : 0 \leq i \leq q\}$. 

\bigskip

\noindent \textbf{Stage VI:} This stage is further divided into 5 phases. For $1 \leq i \leq 5$, in the $i$th phase, offering only edges of $K_n[V_{i+1}]$, Waiter forces Client to build a spanning connected subgraph of $K_n[V_{i+1}]$.   

\bigskip

It remains to prove that Waiter can follow the proposed strategy and that, by doing so, he ensures within $200 n d$ rounds that Client's graph will satisfy Properties (i), (ii) and (iii). Starting with the former, it follows from Lemma~\ref{halfexpandPC} that Waiter can play according to Stage I of the proposed strategy. The path $P_m$ needed for Stage II exists by Lemma~\ref{lem::longPath} and the matchings needed for Stages IV and V exist by Lemma~\ref{lem::expanderMatching}. Since, moreover, $4q + k < m$ holds for every $k \leq \lceil n/6 \rceil$, it is straightforward to verify that Waiter can play according to Stages II, III, IV and V of the proposed strategy. Finally, since $q \leq \lfloor |V_i|/2 \rfloor - 1$ holds for every $2 \leq i \leq 6$, it follows by Theorem~\ref{th::connectivity} that Waiter can play according to Stage VI of the proposed strategy.   

Next, we prove that by following the proposed strategy, Waiter ensures that Client's graph will satisfy Properties (i), (ii) and (iii).

Let $G = (V(K_n), \bigcup_{i=1}^7 E(G_i))$, that is, $G$ is Client's graph at the end of Stage I. For every $1 \leq i \leq 7$, let $(V_L(G_i), V_R(G_i))$ denote the bipartition of $G_i$. Note that for every triple of sets $(S_a, S_b, S_c)$ such that $1 \leq a < b < c \leq 7$ and $S_i \subseteq V_L(G_i)$ for every $i \in \{a,b,c\}$, we have $N_{G_a}(S_a) \cap N_{G_b}(S_b) \cap N_{G_c}(S_c) = \emptyset$. Since $G_i$ is a $(d, 2/(3d))$-half-expander for every $1 \leq i \leq 7$, for every $S \subseteq V(G)$ with $|S| \leq n/(20d) < 2/(3d) \cdot \lfloor n/6 \rfloor$ we have   
$$
|N_G(S)| \geq \Big|\bigcup_{i=1}^7 N_{G_i}(S \cap V_L(G_i)) \Big| - |S| \geq \frac{1}{2} \sum_{i=1}^7 |N_{G_i}(S \cap V_L(G_i))| - |S| \geq \Big(\frac{d}{2} - 1 \Big)|S| \,.
$$

Hence, $G$ is $(d/2-1, 1/(20d))$-expander; this proves (i). 

Fix some $3 \leq k \leq \lceil n/6 \rceil$. It is easy to verify that Waiter forced a cycle of length $k$ in Client's graph in Stage II if $k \equiv 1 \mod 4$, in Stage III if $k \equiv 3 \mod 4$, in Stage IV if $k \equiv 0 \mod 4$ and in Stage V if $k \equiv 2 \mod 4$. This proves (ii).  

In Stage VI, Waiter makes sure that $G_C[V_i]$ is connected for every $2 \leq i \leq 6$. Since, moreover, $G_j$ is a $(d, 2/(3d))$-half-expander, for every $1 \leq j \leq 7$, it follows that $G_C$ is connected as well. This proves (iii).  

Finally, we prove that Waiter can achieve his goals quickly. It is evident that Stage I lasts at most 
$$
\frac{7 \cdot e(K_{\lceil n/6 \rceil, \lceil n/6 \rceil})}{q+1} \leq \frac{7 \cdot \lceil n/6 \rceil^2}{n/(1000 d)} \leq 198 n d 
$$
rounds.

In Stages II, III, IV and V, for every $3 \leq k \leq \lceil n/6 \rceil$ Waiter spends exactly one round forcing a cycle of length $k$ in $G_C$. Therefore, Stages II, III, IV and V together last at most $n/6$ rounds.

It follows by Proposition~\ref{fast} and by Theorem~\ref{th::connectivity} that Stage VI lasts at most
$$
\frac{5 \cdot e(K_{\lceil n/6 \rceil})}{\lfloor\lfloor n/6 \rfloor /2 \rfloor} \leq (1+o(1)) \frac{60 n^2}{72 n} < n
$$
rounds.

To summarize, Waiter can force Client to build a graph which satisfies Properties (i), (ii) and (iii) in at most $200 n d$ rounds. 
\end{proof}

Our sixth lemma asserts that, playing on $E(K_n)$, Waiter can force Client to build a Hamiltonian expander which admits a cycle of every short length. The proof of Hamiltonicity is analogous to Beck's proof for Maker-Breaker games~\cite{beckham}. 

\begin{lemma}\label{exp_ham}
There exists a constant $c > 0$ such that for sufficiently large $n$ and $q < c n$, playing a $(q : 1)$ Waiter-Client game on $E(K_n)$, Waiter has a strategy to force Client to build a graph $G_C$ which satisfies all of the following properties:
\begin{enumerate}[{\bf (i)}]
\item $G_C$ is a $(2, 1/120)$-expander;
\item $G_C$ contains a cycle of length $k$ for every $3 \leq k \leq \lceil n/6 \rceil$;
\item $G_C$ is Hamiltonian.
\end{enumerate}
\end{lemma}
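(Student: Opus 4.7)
The plan is to follow the expander-plus-boosters template pioneered by P\'osa~\cite{Posa} and adapted to positional games by Beck~\cite{beckham}. Waiter plays in two stages. In Stage I he invokes Lemma~\ref{exp_cyc} with $d = 6$, so that $d/2 - 1 = 2$ and $1/(20d) = 1/120$; after at most $1200 n$ rounds, $G_C$ is a connected $(2, 1/120)$-expander containing a cycle of every length $3 \leq k \leq \lceil n/6 \rceil$. For this to apply we need $q \leq n/6000$, so the constant $c$ will have to satisfy $c \leq 1/6000$. Since expansion, containing a prescribed list of cycles, and connectedness are all monotone under edge addition, properties (i) and (ii) are then secured for the rest of the game, and only property (iii) remains to be enforced.

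In Stage II, Waiter forces $G_C$ to become Hamiltonian by absorbing one booster per round. At any point during Stage II in which $G_C$ is not yet Hamiltonian, $G_C$ is a connected non-Hamiltonian $(2, 1/120)$-expander, so by Lemma~\ref{lem::boosters} it has at least $(1/120)^2 \cdot n^2/2 = n^2/28800$ boosters. Throughout the entire game so far Waiter has claimed at most $q \cdot (1200n + n) = 1201 nq$ edges, because Stage II lasts at most $n$ rounds: in each of its rounds the booster claimed by Client either closes a Hamilton cycle or strictly lengthens the longest path of $G_C$, a quantity bounded by $n-1$. Consequently the number of free boosters of $G_C$ is at least $n^2/28800 - 1201 nq$, and choosing $c \leq 1/(28800 \cdot 1202)$ makes this at least $q+1$ for sufficiently large $n$. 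Waiter therefore offers $q+1$ of these free boosters, Client is forced to take one, and the required progress is made. After at most $n$ such rounds $G_C$ is Hamiltonian; Waiter then plays arbitrarily to exhaust the board, and by monotonicity all three properties persist.

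The main obstacle is the counting balance between boosters and Waiter's edges: the number of boosters is $\Theta(n^2)$, while Waiter's total edge count is $O(nq) = O(cn^2)$, and we need the former to comfortably dominate the latter so that there are always at least $q+1$ free boosters to offer. This is exactly what fixes the order of magnitude of $c$; taking $c$ to be a sufficiently small absolute constant (any $c \leq 1/(28800 \cdot 1202)$ suffices) resolves the counting issue, and no deeper difficulty arises beyond the direct application of Lemmas~\ref{exp_cyc} and~\ref{lem::boosters}.
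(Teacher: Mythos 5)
Your proof is correct and follows essentially the same two-stage strategy as the paper: apply Lemma~\ref{exp_cyc} with $d=6$ to get the connected $(2,1/120)$-expander with all short cycles, then feed Client $q+1$ free boosters per round, using Lemma~\ref{lem::boosters} and the bound of $1201nq$ on Waiter's total edge count to guarantee enough free boosters. Your accounting (requiring both $c \le 1/6000$ for Lemma~\ref{exp_cyc} and $c$ small enough that $n^2/28800 - 1201nq \ge q+1$) matches the paper's argument and is, if anything, slightly more explicit.
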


\begin{proof}
In order to prove the lemma, we present a strategy for Waiter; it is divided into the following two stages. 

\bigskip

\noindent \textbf{Stage I:} In at most $1200 n$ rounds, Waiter forces Client to build a connected graph which satisfies Properties (i) and (ii).

\bigskip

\noindent \textbf{Stage II:} As long as $G_C$ is not Hamiltonian, in every round of this stage, Waiter offers Client $q+1$ free boosters of his current graph $G_C$. 

\bigskip

Since, by definition, after claiming at most $n$ boosters, Client's graph becomes Hamiltonian, it is evident that if Waiter can follow the proposed strategy, then he can ensure that, at the end of the game, Client's graph will satisfy Properties (i), (ii) and (iii). It thus remains to prove that he can indeed do so.

It follows by Lemma~\ref{exp_cyc}, with $d=6$, that Waiter can play according to Stage I of the proposed strategy. As noted above, Stage II lasts at most $n$ rounds. Therefore, the entire game lasts at most $1201 n$ rounds. Since being an expander is a monotone increasing property, at any point during Stage II, $G_C$ is a $(2, 1/120)$-expander. Therefore, by Lemma~\ref{lem::boosters}, there are at least $n^2/28800$ boosters of $G_C$ in $K_n$. By choosing $c$ to be sufficiently small, we can ensure that $n^2/28800 - 1201 n q > 0$. It follows that, at any point during Stage II, there are enough free boosters for Waiter to offer.     
\end{proof}

Finally, we can complete the proof of the main result of this section.

\begin{proof}[Proof of Theorem~\ref{th::pancyclic}(ii)]
Let $V_1 \cup V_2$ be a partition of $V(K_n)$ such that $|V_2| = n_2 = \lfloor n/7 \rfloor - 1$ and $|V_1| = n_1 = n - n_2$. Let $\tilde{c} < 1/120$ be the constant whose existence in ensured in Lemma~\ref{exp_ham}, applied to $K_{n_2}$, and let $q < \tilde{c} n_2$. In order to prove the theorem, we present a strategy for Waiter. Waiter will force Client to build Hamiltonian expanders on $V_1$ and on $V_2$, which contain cycles of every short length. Then, by offering edges between $V_1$ and $V_2$, Waiter will force long cycles in Client's graph. Here is a formal description of Waiter's strategy, divided into four stages. 

\bigskip

\noindent \textbf{Stage I:} Offering only edges with both endpoints in $V_1$, Waiter forces Client to build a graph $G_1 \subseteq K_n[V_1]$ which satisfies Properties (i), (ii) and (iii) of Lemma~\ref{exp_ham}.  

\bigskip

\noindent \textbf{Stage II:} Offering only edges with both endpoints in $V_2$, Waiter forces Client to build a graph $G_2 \subseteq K_n[V_2]$ which satisfies Properties (i), (ii) and (iii) of Lemma~\ref{exp_ham}.

\bigskip
 
\noindent \textbf{Stage III:} Let $v_1 \ldots v_{n_1}$ be a Hamilton path in $G_1$. In the unique round of this stage, Waiter offers Client $q+1$ free edges of $E(v_{n_1}, V_2)$. 

\bigskip 

\noindent \textbf{Stage IV:} Let $v_1 \ldots v_{n_1} w_1 \ldots w_{n_2}$ be a Hamilton path in $G_C$ and let 
$$
S = \{z \in V_2 : \textrm{ there exists a Hamilton path in } G_2 \textrm{ between } w_1 \textrm{ and } z\} \,.
$$
For every $1 \leq j \leq n_1 - 1$, in the $j$th round of this stage, Waiter offers Client $q+1$ free edges of $E(v_j, S)$.   

\bigskip 

It is evident that Waiter can play according to Stage III of the proposed strategy and it follows by Lemma~\ref{exp_ham} that he can play according to Stages I and II as well. In order to prove that he can play according to Stage IV of the proposed strategy, it suffices to prove that $|S| \geq q+1$. However, since $G_2$ is a $(2, 1/120)$-expander, it follows from Lemma~\ref{lem::manyEndPoints} that $|S| \geq n_2/120 \geq  \tilde{c} n_2 + 1 \geq q+1$, where the second inequality holds for sufficiently large $n$.

Now, fix some $3 \leq k \leq n$. If $3 \leq k \leq \lceil n_1/6 \rceil$, then by Lemma~\ref{exp_ham}, there is a cycle of length $k$ in $G_1$. In order to ensure the existence of long cycles, let $k = n - j + 1$ for some $1 \leq j \leq n_1 - 1$ (note that $n - n_1 + 1 = n_2 + 1 \leq \lceil n_1/6 \rceil$). Let $v_j w$ denote the edge Client claims in the $j$th round of Stage II and let $P_w$ be a path between $w_1$ and $w$ in $G_2$. Then $v_j \ldots v_{n_1} w_1 P_w w v_j$ is a cycle of length $k$ in $G_C$. 
\end{proof}

We end this section with a simple proof of Proposition~\ref{prop::smallerThreshold}. 

\begin{proof}[Proof of Proposition~\ref{prop::smallerThreshold}] Fix some $q \geq 0.49 n$. In order to prove the theorem, we present a strategy for Client; it is divided into the following two simple stages. 

\bigskip

\noindent \textbf{Stage I:} At any point during the game, let $V_0$ denote the set of isolated vertices in $G_C$ and let $U = V(K_n) \setminus V_0$. As long as $|U| < n/4$, Client plays arbitrarily. As soon as $|U| \geq n/4$ first occurs, this stage is over and Client proceeds to Stage II. 

\bigskip

\noindent \textbf{Stage II:} At the end of Stage I, let $x \in V_0$ be a vertex for which $d_{G_W}(x) \geq d_{G_W}(y)$ for every $y \in V_0$. In every round of this stage, if possible, Client claims an arbitrary edge which is not incident to $x$; otherwise, he plays arbitrarily.    

\bigskip

It is evident that Client can follow the proposed strategy. It thus remains to prove that, by doing so, he ensures that $\delta(G_C) \leq 1$ will hold at the end of the game. 

Let $t$ denote the total number of rounds played in Stage I. At the end of Stage I, let $k = |U|$; clearly $k \in \{\lceil n/4 \rceil, \lceil n/4 \rceil + 1\}$ and $t \geq k/2$. At the end of Stage I, we have $e(G_W[U]) \leq \binom{k}{2} - e(G_C) \leq \binom{k}{2} - k/2$. Therefore, at the end of Stage I, the average over $V_0$ of Waiter's degree is
\begin{eqnarray*}   
\frac{1}{n-k} \sum_{u \in V_0} d_{G_W}(u) &\geq& \frac{1}{n-k} \left(tq - \binom{k}{2} + k/2 \right) \geq \frac{k(q-k+2)}{2(n-k)}\\
&\geq& \frac{n(q - n/4)}{8(n - n/4)} = \frac{1}{6} \left(q - \frac{n}{4} \right) > n - 2q \,,
\end{eqnarray*}
where the last inequality holds for $q \geq 0.49 n$. It thus follows by the choice of $x$ that $d_{G_W}(x) > n - 2q$. By the description of the proposed strategy, we conclude that Client will claim at most one edge incident to $x$ and thus $\delta(G_C) \leq d_{G_C}(x) \leq 1$ will hold at the end of the game.  
\end{proof}


\section{Concluding remarks and open problems}
\label{sec::openprob}

\noindent \textbf{The giant component game}. We have proved that, similarly to the random graph $G(n,p)$, the component structure of Client's graph undergoes a phase transition. Namely, we proved that if at the end of the game, Client's graph contains at most $(1 - \varepsilon)n/2$ edges, where $\varepsilon > 0$ is an arbitrarily small constant, then both players have a strategy to ensure that the size of a largest component in Client's graph will be of order $\ln n$, whereas if Client's graph contains at least $(1 + \varepsilon)n/2$ edges, then Waiter has a strategy to force a giant, linearly sized component in Client's graph. In the sub-critical regime, Client's strategy ensures that every  component in his graph will contain at most $c \varepsilon^{-2} \ln n$ vertices for some constant $c > 0$. This is the same dependency on $\varepsilon$ as in the random graph $G(n, (1 - \varepsilon)/n)$. In the super-critical regime, Waiter's strategy ensures that the largest component in Client's graph will contain at least $2 \varepsilon n - 2$ vertices. This is the same dependency on $\varepsilon$ as in the random graph $G(n, (1 + \varepsilon)/n)$. We believe that the latter bound (as stated in Theorem~\ref{BigCompSup}) is sharp.
 
\begin{conjecture}
For any constant $\varepsilon > 0$ and sufficiently large $n$, if $q = (1 - \varepsilon) n$, then $\comp(n,q) = \min\{n, 2(n-q-1)\}$.  
\end{conjecture}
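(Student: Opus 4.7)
By Theorem~\ref{BigCompSup} the lower bound $\comp(n,q)\ge\min\{n,2(n-q-1)\}$ is already in place, so all that is conjectured is the matching upper bound: for fixed $\varepsilon\in(0,1/2)$ and $q=(1-\varepsilon)n$, Client has a strategy keeping every component of his graph of size at most $s^{*}:=2\varepsilon n-2$. Guiding the design of such a strategy is the random-graph analogy: Client's final edge count is $\approx n/(2(1-\varepsilon))$, and a random graph with that many edges (namely $G(n,(1+\Theta(\varepsilon))/n)$) has its giant component of size $(2+o(1))\varepsilon n$; the conjecture asserts that optimal play matches this value to within an additive constant.

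The natural tool is the Erd\H{o}s--Selfridge--Beck potential method (Theorem~\ref{BESPC}), but the bad family used in the proof of Theorem~\ref{th::largeComponent}(\ref{comp_client})---tuples of non-trivial paths whose union is a subtree---has divergent potential once $q<n$, so a new bad family is needed. The essential structural feature to be encoded is not merely that Client claims a large subtree, but that Client claims a large set $S$ of vertices that is simultaneously connected in $G_C$ \emph{and} disconnected from $V(K_n)\setminus S$ in $G_C$. I would therefore associate with each candidate set $S$ of size $s^{*}+1$ and spanning tree $T$ of $K_n[S]$ a mixed weight $(q+1)^{-(|S|-1)}\bigl(q/(q+1)\bigr)^{|S|(n-|S|)}$, the first factor encoding Client's probability of claiming $T$ and the second encoding Waiter's probability of claiming the entire cut $E(S,V(K_n)\setminus S)$ in the uniform random model.

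The proof would proceed in three steps. \emph{Step 1:} estimate the total weight using Lemma~\ref{cl::fewTrees} and Stirling's formula, reducing it to a one-variable expression in $\alpha=|S|/n$; a saddle-point analysis of the exponent $\alpha[\log(1+\varepsilon)-\varepsilon+\alpha(1+\varepsilon)]$ should show convergence precisely for $s^{*}\ge 2\varepsilon n - O(1)$, matching the branching-process survival probability. \emph{Step 2:} develop a two-sided variant of Theorem~\ref{BESPC} in which the ``bad'' event is the joint condition that Client fully claims $T$ \emph{and} Waiter fully claims the cut; this can be engineered by combining an Erd\H{o}s--Selfridge-style weight-decrease argument for Client's avoidance half with an application of Theorem~\ref{cpwin} for Waiter's forced-claim half. \emph{Step 3:} use the two-sided bound to extract an explicit strategy for Client, in which each round he picks the offered edge that most decreases the combined potential.

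The main obstacle is Step 2: standard potential arguments are one-sided, so synthesising a mixed Client/Waiter potential requires care, most likely by splitting the board into two disjoint parts on which an Erd\H{o}s--Selfridge blocker and a cut-enforcer are run in parallel, and then synchronising them via a careful coupling. A second, more delicate issue is obtaining the \emph{exact} constant $2(n-q-1)$ as opposed to $(2+o(1))\varepsilon n$: the asymptotic potential estimate will inevitably lose at least an additive $o(\varepsilon n)$, so it must be supplemented by an explicit endgame analysis for the final $O(1)$ rounds, in which Waiter's remaining offers are few enough to handle by combinatorial case-check to ensure that the largest Client-component cannot be pushed one more vertex beyond the threshold.
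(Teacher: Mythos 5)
This statement is posed as a conjecture in the paper: the authors prove only the lower bound $\comp(n,q)\ge\min\{n,2(n-q-1)\}$ (Theorem~\ref{BigCompSup}) and explicitly leave the matching upper bound open. So there is no proof in the paper to compare against, and your proposal would need to stand on its own as a complete argument. It does not: it is a research plan whose pivotal ingredient, the ``two-sided variant of Theorem~\ref{BESPC}'' in Step~2, is nowhere established. Worse, the synthesis you suggest is not coherent as stated: Theorem~\ref{cpwin} is a winning criterion for \emph{Waiter} (it lets Waiter force Client to hit every set of a family), so it cannot serve as a building block of a \emph{Client} strategy; Client has no means of ``forcing Waiter to claim'' a prescribed cut, since Waiter chooses which edges to offer and automatically keeps whatever Client declines. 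A potential that multiplies Client's tree-weight $(q+1)^{-(|S|-1)}$ by the factor $(q/(q+1))^{|S|(n-|S|)}$ computes an expectation in the doubly-random model, but derandomizing it in Client's favour against an adversarial Waiter is precisely the open difficulty, not a routine engineering step.

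Two further points would need repair even if Step~2 were available. First, the bad family must range over all sets $S$ with $|S|\ge s^*+1$, not just $|S|=s^*+1$: a connected $(s^*+1)$-subset of Client's graph need not have a Waiter-owned boundary (the component containing it may be larger), so the mixed event for the single size $s^*+1$ does not cover the event ``largest component exceeds $s^*$''; only the exact component, of whatever size it turns out to be, has an all-Waiter cut. Second, and more fundamentally, your saddle-point analysis can at best locate the threshold at $(2+o(1))\varepsilon n$, whereas the conjecture claims the exact value $2(n-q-1)=2\varepsilon n-2$. The discrepancy is $o(\varepsilon n)$, i.e.\ potentially polynomial in $n$, and cannot be absorbed by a ``combinatorial case-check of the final $O(1)$ rounds.'' Closing that additive gap is arguably the whole content of the conjecture, and your proposal offers no mechanism for it.
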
   

It would be interesting to study $\comp(n,q)$ in the critical window, i.e. when $q = (1 + o(1)) n$. We can prove that if $q = n+k$ and $\omega(1) = k = k(n) = o(n)$, then $\comp(n,q) = o(n)$ (this is done by applying Theorem~\ref{BESPC} to the family of labeled non-trivial paths in $K_n$, thus proving $\comp(n,q) \leq 2\sqrt{n/\varepsilon}$), but there is still room for improvement. We would also like to know whether one can obtain a similar result in the super-critical regime, i.e. when $q = n-k$ for some $\omega(1) = k = k(n) = o(n)$. Another challenging task is to determine the width of the critical window.   

\bigskip

\noindent \textbf{A Waiter-Client Lehman type result}. It was proved in Theorem~\ref{th::connectivity} that the threshold bias of the connectivity Waiter-Client game is $\lfloor n/2 \rfloor - 1$. This is \textbf{precisely} the same as the threshold bias of the strict Avoider-Enforcer connectivity game~\cite{HKSae}. However, the arguments used for proving these two results are completely different. In particular, in~\cite{HKSae}, the result follows from a more general Lehman type result (see~\cite{Lehman} for Lehman's Theorem). We were interested whether an analogous result holds for Waiter-Client games as well.   

\begin{question} \label{q::LehmanWC}
Let $q$ be a positive integer and let $G$ be a graph which admits $q+1$ pairwise edge disjoint spanning trees. Playing a $(q : 1)$ Waiter-Client game on $E(G)$, is it true that Waiter can force Client to build a spanning tree?  
\end{question}

For $q=1$, this question was answered affirmatively in~\cite{cdm}. Surprisingly, given a sufficiently large $n=|V(G)|$, for almost all values of $q$, the answer to this question is negative \cite{sylwia}.

\bigskip

\noindent \textbf{Avoiding cycles}. It was proved in Theorem~\ref{thmcyc} that, for $q \geq 1.1 n$, Client can keep his graph acyclic, whereas for $q \leq (1 - \varepsilon) n$, Waiter can force Client to build a cycle. We believe that the latter is asymptotically tight.    

\begin{conjecture}
For any constant $\varepsilon > 0$ and integer $q \geq (1 + \varepsilon)n$, playing a $(q : 1)$ Waiter-Client game on $E(K_n)$, Client has a strategy to keep his graph acyclic.
\end{conjecture}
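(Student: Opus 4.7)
The natural first attempt is a direct application of Theorem~\ref{BESPC} to the family $\cF_3$ of edge-sets of cycles in $K_n$. Following the computation in the proof of Theorem~\ref{thmcyc}(i), one obtains
\[
\Phi(\cF_3) \;\leq\; \tfrac{1}{2} \left(\tfrac{n}{q}\right)^{2} \ln\!\left(\tfrac{q}{q-n}\right),
\]
which, for $q = (1+\varepsilon)n$, is of order $\tfrac{1}{2}\ln(1/\varepsilon)$. This exceeds $1$ for small $\varepsilon$, so a purely static Erd\H{o}s--Selfridge type potential on cycles cannot suffice. The obstruction is also intuitive: the random graph $G(n,\lfloor\binom{n}{2}/(q+1)\rfloor)$ contains a bounded but positive expected number of cycles when $q=(1+\varepsilon)n$, so Client must exploit his adaptivity strictly beyond random play.

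My plan is to combine the potential method with structural control. First, invoke the strategy underlying Theorem~\ref{th::largeComponent}(\ref{comp_client}) to guarantee that every component of Client's graph has order at most $L = c\varepsilon^{-2}\ln n$; this automatically rules out all cycles of length greater than $L$, so only cycles of length at most $L$ remain to be handled. Second, against short cycles I would use a refined adaptive potential: for each pair of vertices $u,v$ at distance at most $L-1$ in $G_C$, weight the (still free) edge $uv$ by some monotone function of the number of short $u$--$v$ paths currently in $G_C$, so that the "cost" of an offered edge reflects how many cycles it would close or nearly close. In every round Client chooses an offered edge minimising the resulting total weight, in the spirit of the derandomisation of the first moment underlying Theorem~\ref{BESPC}.

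The main obstacle is to harmonise these two components into a single strategy. The strategy of Theorem~\ref{th::largeComponent}(\ref{comp_client}) is itself derived from a potential argument on $r$-tuples of paths, and combining it with a cycle-avoidance potential is delicate: a choice that prevents a short cycle may extend a long path that threatens a linearly large component, and vice versa. A successful proof likely requires designing a single weighted potential that simultaneously controls cycle counts at every length \emph{and} long-path counts, with carefully tuned $\varepsilon$-dependent weights, and then verifying $\Phi<1$ for all $q\geq(1+\varepsilon)n$. This balancing act is, in my view, where the real difficulty of the conjecture resides, and it plausibly demands an ingredient beyond the tools developed in this paper.
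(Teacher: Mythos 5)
This statement is posed in the paper as an open conjecture; the paper contains no proof of it, so there is nothing to compare your argument against on the paper's side. More importantly, your proposal is not a proof either: it is a research plan whose decisive step is explicitly left unresolved. Your diagnosis of the obstruction is accurate and matches the paper's own computation in Theorem~\ref{thmcyc} --- the static Erd\H{o}s--Selfridge potential $\Phi(\cF_3)\approx\tfrac12\ln(1/\varepsilon)$ exceeds $1$ for small $\varepsilon$, which is exactly why the paper only proves acyclicity for $q\geq 1.1n$ and leaves the range $(1+\varepsilon)n\leq q<1.1n$ open. But diagnosing why Theorem~\ref{BESPC} fails is not the same as overcoming the failure.

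Two concrete gaps. First, the reduction to short cycles via Theorem~\ref{th::largeComponent}(\ref{comp_client}) does not by itself make the potential method viable: restricting $\cF_3$ to cycles of length at most $L=c\varepsilon^{-2}\ln n$ barely changes the potential, since $\sum_{k=3}^{L}\binom{n}{k}\tfrac{(k-1)!}{2}(q+1)^{-k}$ is still of order $\ln(1/\varepsilon)$ (the sum is dominated by $k$ up to about $1/\varepsilon$, well below $L$). So even granting the component bound for free, the remaining short-cycle problem is essentially as hard as the original one. Second, the ``harmonisation'' issue is real and unresolved: Client has a single choice per round, the component-control strategy of Theorem~\ref{th::largeComponent}(\ref{comp_client}) only guarantees that the number of fully claimed $r$-tuples is below a \emph{large} threshold (not below $1$), and the standard trick of running Theorem~\ref{BESPC} on a weighted union of the two families requires the cycle part of the potential to be below $1$, which is precisely what fails. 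Your proposed ``refined adaptive potential'' is named but not defined, and no inequality is verified for it. As it stands, the proposal correctly identifies where the difficulty lies but supplies no mechanism to resolve it; the conjecture remains open.
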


Note that, similarly to the case of $\comp(n,q)$ we know very little about the behavior of $\cyc(n,q)$ in the critical window, i.e. when $q = (1 + o(1)) n$. 

\bigskip

\textbf{Minimum degree $k$ and $k$-connectivity}. Let $q_{\mathcal A} = q_{\mathcal A}(n)$ denote the threshold bias of the game $\WC(n,q,{\mathcal A})$. We determined precisely the threshold bias of the connectivity game in Theorem~\ref{th::connectivity} and we determined the threshold bias of the Hamiltonicity game up to a multiplicative constant factor in Theorem~\ref{th::pancyclic}. There are other natural graph properties ${\mathcal A}$, which seem simpler than Hamiltonicity, for which we can prove that $q_{\mathcal A} = \Theta(n)$ but cannot determine its asymptotic value. For example, for a positive integer $k$, let ${\mathcal D}_k = {\mathcal D}_k(n)$ denote the property of $n$-vertex graphs having minimum degree at least $k$ and let ${\mathcal C}_k = {\mathcal C}_k(n)$ denote the property of being $k$-vertex-connected. It is not hard to see that $n/(2k) - 3 \leq q_{{\mathcal C}_k} \leq q_{{\mathcal D}_k} \leq n/k$. Indeed, the upper bound follows directly from the simple fact that every graph on $n$ vertices with minimum degree at least $k$ has at least $k n/2$ edges; we can in fact improve it slightly by an argument analogous to the proof of Proposition~\ref{prop::smallerThreshold}. The lower bound can be obtained via the following simple strategy. Let $V_1 \cup \ldots \cup V_k$ be an equipartition of $V(K_n)$. Using Theorem~\ref{th::connectivity}, Waiter first forces Client to build a connected graph on $V_i$ for every $1 \leq i \leq k$. For every $1 \leq i < j \leq k$, let $G_{ij} = (V_i \cup V_j, E(V_i,V_j))$ and let $H_{ij}^1$ and $H_{ij}^2$ be edge disjoint $(q+1)$-regular subgraphs of $G_{ij}$. In an arbitrary order, for every $1 \leq i < j \leq k$, every $x \in V_i$ and every $y \in V_j$, Waiter offers $q+1$ edges of $H_{ij}^1$ incident to $x$ and $q+1$ edges of $H_{ij}^2$ incident to $y$. It is not hard to see that the graph built by Client admits $k$ internally pairwise vertex disjoint paths between any pair of vertices and is thus $k$-connected by Menger's Theorem. We are currently not able to determine $q_{{\mathcal D}_k}$ asymptotically; not even for $k=1$. At present, we are also unable to determine $q_{{\mathcal C}_k}$ asymptotically for any $k \geq 2$. 

Similarly, we do not know the answer to the following two questions (though we suspect it is affirmative):       

\begin{question} \label{q::ConvsDegree}
Is there an integer $k \geq 2$ for which $q_{{\mathcal D}_k}$ is substantially larger than $q_{{\mathcal C}_k}$?   
\end{question}

\begin{question} \label{q::ConvsHam}
Is $q_{{\mathcal C}_2}$ substantially larger than $q_{{\mathcal H}}$? 
\end{question} 

Let us remark that the answers to the analogous questions for Maker-Breaker games are both negative.

\section*{Acknowledgment}

We thank the anonymous referee for helpful comments. The last two authors thank the Mittag-Leffler Institute where a part of the research presented in this paper was conducted.

\bibliographystyle{amsplain}

\begin{thebibliography}{99}

\bibitem{AKS}
M. Ajtai, J. Koml\'os, and E. Szemer\'edi, \textit{First occurrence of Hamilton cycles in random graphs}. Cycles in graphs (Burnaby, B.C., 1982), 173--178, North-Holland Math. Stud., 115, North-Holland, Amsterdam, 1985.
 
\bibitem{sylwia}
S. Antoniuk, C. Grosu, and L. Narins, \textit{On the connectivity Waiter-Client game}, arXiv:1510.05852.

\bibitem{beckwf}
J. Beck, \textit{Remarks on positional games}, Acta Math.~Acad.~Sci.~Hungar. {\bf 40} (1982), 65--71.

\bibitem{beckham}
J. Beck, \textit{Random graphs and positional games on the complete graph}, Annals of Discrete Mathematics {\bf 28} (1985), 7--13.

\bibitem{becksec}
J. Beck, \textit{Positional games and the second moment method}, Combinatorica {\bf 22} (2002), 169--216.

\bibitem{TTT}
J. Beck, \textbf{Combinatorial games: Tic-tac-toe theory}, Encyclopedia of Mathematics and its Applications, {\bf 114}. Cambridge University Press, Cambridge, 2008. xiv+732~pp.

\bibitem{pc}
M. Bednarska-Bzd\c ega, \textit{On weight function methods in Chooser-Picker games}, Theoretical Computer Science {\bf 475} (2013), 21--33.

\bibitem{BHL}
M. Bednarska-Bzd\c ega, D. Hefetz, and T.~{\L}uczak, \textit{Picker-Chooser fixed graph games}, to appear in Journal of Combinatorial Theory Series B, arXiv:1402.7308.

\bibitem{mbtrans}
M. Bednarska and T.~{\L}uczak, \textit{Biased positional games and the phase transition}, Random Structures and Algorithms {\bf 18} (2001), 141--152.

\bibitem{Bol}
B. Bollob\'as, {\bf Random Graphs}, 2nd edition, Cambridge University Press, 2001.

\bibitem{Bham}
B. Bollob\'as, \textit{The evolution of sparse graphs}. Graph theory and combinatorics (Cambridge, 1983), 35--57, Academic Press, London, 1984. 
 
\bibitem{CE}
V. Chv\'atal and P. Erd\H os, \textit{Biased positional games}, Annals of Discrete Mathematics {\bf 2} (1978), 221--228.

\bibitem{CF}
C. Cooper and A. M. Frieze, \textit{Pancyclic random graphs}. Random graphs '87 (Pozna\'n, 1987), 29--39, Wiley, Chichester, 1990. 
 
\bibitem{ctcs}
A. Csernenszky, \textit{The Picker-Chooser diameter game}, Theoretical Computer Science {\bf 411} (2010), 3757--3762.

\bibitem{cdm}
A. Csernenszky, C. I. M\'andity and A. Pluh\'ar, \textit{On Chooser-Picker positional games}, Discrete Mathematics {\bf 309} (2009), 5141--5146.

\bibitem{ER}
P. Erd\H{o}s and A. R\'enyi, \textit{On the evolution of random graphs}, Magyar Tud. Akad. Mat. Kutat\'o Int. K\"ozl. 5 (1960), 17--61.
 
\bibitem{ES}
P. Erd\H os and J. L. Selfridge, \textit{On a combinatorial game}, Journal of Combinatorial Theory Series A {\bf 14} (1973), 298--301.

\bibitem{pancMB}
A. Ferber, M. Krivelevich, and H. Naves, \textit{Generating random graphs in biased Maker-Breaker games}, to appear in Random Structures and Algorithms, arXiv:1310.4096v4.

\bibitem{fried}
J. Friedman and N. Pippenger, \textit{Expanding graphs contain all small trees}, Combinatorica {\bf 7} (1987), 71--76.

\bibitem{GSz}
H. Gebauer and T. Szab\'o, \textit{Asymptotic random graph intuition for the biased connectivity game}, Random Structures and Algorithms {\bf 35} (2009), 431--443.

\bibitem{HJ}
A. W. Hales and R. I. Jewett, \textit{Regularity and positional games}, Transactions of the American Mathematical Society {\bf 106} (1963), 222--229.

\bibitem{HKSSae}
D. Hefetz, M. Krivelevich, M. Stojakovi\'c, and T. Szab\'o, \textit{Avoider-Enforcer: The rules of the game}, Journal of Combinatorial Theory Series A {\bf 117} (2010), 152--163. 

\bibitem{HKSS}
D. Hefetz, M. Krivelevich, M. Stojakovi\'c, and T.~Szab\'o, \textbf{Positional Games}, Birkh\"auser, 2014.

\bibitem{HKSae}
D. Hefetz, M. Krivelevich, and T. Szab\'o, \textit{Avoider-Enforcer games}, Journal of Combinatorial Theory Series A {\bf 114} (2007), 840--853. 

\bibitem{JLR}
S. Janson, T. \L uczak, and A. Ruci\'nski, {\bf Random graphs}, Wiley, 2000.

\bibitem{KSz}
J. Koml\'os and E. Szemer\'edi, \textit{Limit distribution for the existence of Hamiltonian cycles in a random graph}, Discrete Math. {\bf 43} (1983),  55--63.

\bibitem{kriv}
M. Krivelevich, \textit{The critical bias for the Hamiltonicity game is (1 + o(1))n/ ln n}, Journal of the American Mathematical Society {\bf 24} (2011), 125--131. 

\bibitem{KS}
M. Krivelevich and B. Sudakov, \textit{The phase transition in random graphs -- a simple proof}, Random Structures and Algorithms {\bf 43} (2013), 131--138.

\bibitem{cover}
M. Krivelevich and T. Szab\'o, \textit{Biased positional games and small hypergraphs with large covers}, Electronic Journal of Combinatorics {\bf 15} (2008), publ.~R70. 

\bibitem{Lehman}
A. Lehman, \textit{A solution of the Shannon switching game}, J. Soc. Indust. Appl. Math. {\bf 12} (1964), 687--725.

\bibitem{L}
T. {\L}uczak,  \textit{Cycles in random graphs}, Discrete Math. {\bf 98} (1991), 231--236.
 
\bibitem{Posa}
L. P\'osa, \textit{Hamiltonian circuits in random graphs}, Discrete Mathematics {\bf 14} (1976), 359--364.

\bibitem{West}
D. B. West, 
{\bf Introduction to Graph Theory}, 2nd edition, 
Prentice Hall, 2001.


\end{thebibliography}

\end{document}